\newtheorem{theorem}{Theorem}[section]
\newtheorem{lemma}{Lemma}
\newtheorem{proposition}{Proposition}
\theoremstyle{definition}
\newtheorem*{definition}{Definition}
\theoremstyle{remark}
\theoremstyle{remarks}
\newtheorem*{remarks}{Remarks}
\theoremstyle{example}
\numberwithin{equation}{section}
\begin{document}
\title{Unitary Loop Groups and Factorization}

\author{Doug Pickrell}
\email{pickrell@math.arizona.edu}
\author{Benjamin Pittman-Polletta}
\email{bpolletta@math.arizona.edu}

\begin{abstract}We discuss a refinement of triangular factorization for unitary matrix-valued functions
on $S^1$.
\end{abstract}
\maketitle

\setcounter{section}{-1}

\section{Introduction}\label{Introduction}

The purpose of this paper is to generalize Theorems
\ref{SU(2)theorem1} and \ref{U(2)theorem} below to general simply
connected compact Lie groups.

Throughout the paper $\dot K$ is a simply connected compact Lie
group, $\dot G$ is the complexification, and $L_{fin}\dot K$
($L_{fin}\dot G$) denotes the group consisting of functions $S^1
\to \dot K$ ($\dot G$, respectively) having finite Fourier series,
relative to some matrix representation, with pointwise
multiplication. For example, for $\zeta \in \mathbb C$ and $n\in
\mathbb Z$, the function
$$S^1 \to SU(2):z \to
a(\zeta) \left(\begin{matrix} 1&
\zeta z^{-n}\\
-\bar{\zeta}z^n&1\end{matrix} \right),$$ where $a(\zeta)=(1+\vert
\zeta \vert ^2)^{-1/2}$, is in $L_{fin}SU(2)$. Also, if $f(z)=\sum
f_n z^n$, then $f^*=\sum \bar f_n z^{-n}$. Thus if $f \in
H^0(\Delta)$, then $f^* \in H^0(\Delta^*)$, where $\Delta$ is the
open unit disk, and $\Delta^*$ is the open unit disk at $\infty$.

The following two theorems are from \cite{Pi2}.

\begin{theorem}\label{SU(2)theorem1} Suppose that $k_1 \in L_{fin}SU(2)$. The
following are equivalent:

($a_1$) $k_1$ is of the form
$$k_1(z)=\left(\begin{matrix} a(z)&b(z)\\
-b^*&a^*\end{matrix} \right),\quad z\in S^1,$$ where $a$ and $b$
are polynomials in $z$, and $a(0)>0$.

($b_1$) $k_1$ has a factorization of the form
$$k_1(z)=a(\eta_n)\left(\begin{matrix} 1&-\bar{\eta}_nz^n\\
\eta_nz^{-n}&1\end{matrix} \right)..a(\eta_0)\left(\begin{matrix}
1&
-\bar{\eta}_0\\
\eta_0&1\end{matrix} \right),$$ for some $\eta_j \in \mathbb C$.

($c_1$) $k_1$ has triangular factorization of the form
$$\left(\begin{matrix} 1&0\\
\sum_{j=0}^n \bar y_jz^{-j}&1\end{matrix} \right)\left(\begin{matrix} a_1&0\\
0&a_1^{-1}\end{matrix} \right)\left(\begin{matrix} \alpha_1 (z)&\beta_1 (z)\\
\gamma_1 (z)&\delta_1 (z)\end{matrix} \right),$$ where $a_1>0$ and
the third factor is a polynomial in $z$ which is unipotent upper
triangular at $z=0$.

Similarly, the following are equivalent:

($a_2$) $k_2$ is of the form
$$k_2(z)=\left(\begin{matrix} d^{*}&-c^{*}\\
c(z)&d(z)\end{matrix} \right),\quad z\in S^1,$$ where $c$ and $d$
are polynomials in $z$, $c(0)=0$, and $d(0)>0$.

($b_2$) $k_2$ has a factorization of the form
$$k_2(z)=a(\zeta_n)\left(\begin{matrix} 1&\zeta_nz^{-n}\\
-\bar{\zeta}_nz^n&1\end{matrix}
\right)..a(\zeta_1)\left(\begin{matrix} 1&
\zeta_1z^{-1}\\
-\bar{\zeta}_1z&1\end{matrix} \right),$$ for some $\zeta_j \in
\mathbb C$.

($c_2$) $k_2$ has triangular factorization of the form
$$\left(\begin{matrix} 1&\sum_{j=1}^n \bar x_jz^{-j}\\
0&1\end{matrix} \right)\left(\begin{matrix} a_2&0\\
0&a_2^{-1}\end{matrix} \right)\left(\begin{matrix} \alpha_2 (z)&\beta_2 (z)\\
\gamma_2 (z)&\delta_2 (z)\end{matrix} \right), $$ where $a_2>0$
and the third factor is a polynomial in $z$ which is unipotent
upper triangular at $z=0$.

\end{theorem}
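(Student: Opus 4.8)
The plan is to prove the first three conditions equivalent by establishing $(b_1)\Leftrightarrow(a_1)$ and $(a_1)\Leftrightarrow(c_1)$ separately, and then to deduce the second trio from the first by conjugating with the Weyl element. The engine of the $(a)\Leftrightarrow(b)$ equivalence is a single consequence of unitarity: any $k_1\in L_{fin}SU(2)$ automatically has the form displayed in $(a_1)$ with $aa^*+bb^*=1$, and when $a,b$ are polynomials of degree $\le n$, comparing coefficients of $z^n$ (which must vanish for $n\ge 1$) forces $a_n\bar a_0+b_n\bar b_0=0$, where $a_0=a(0)>0$ is real. For $(b_1)\Rightarrow(a_1)$ I would induct on the number of factors, reading the product from the right: if a partial product already has the form in $(a_1)$ with $\deg a,\deg b\le j-1$, then left multiplication by the $j$-th elementary factor keeps the top row polynomial of degree $\le j$, since the only possibly singular contributions $z^jb^*$ and $z^ja^*$ have lowest powers $j-\deg b\ge 1$ and $j-\deg a\ge 1$; evaluating at $z=0$ shows the new $(1,1)$-entry is the old one times $a(\eta_j)>0$, so positivity propagates. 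For $(a_1)\Rightarrow(b_1)$ I would peel factors off the left: given $a,b$ of degree $\le n$, set $\bar\eta_n=-b_n/a(0)$; the identity $a_n\bar a_0+b_n\bar b_0=0$ makes the $z^n$ coefficients of both top entries of $E_n(\eta_n)^{-1}k_1$ vanish, dropping the degree to $\le n-1$, while reality of $a(0)$ gives the new $(1,1)$-entry at $z=0$ as $a(\eta_n)\bigl(a(0)+|b_n|^2/a(0)\bigr)>0$. Iterating reduces to a constant $SU(2)$ matrix with positive $(1,1)$-entry, which is exactly the $j=0$ factor.

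For $(a_1)\Leftrightarrow(c_1)$ I would realize $(c_1)$ as a Birkhoff factorization. The implication $(c_1)\Rightarrow(a_1)$ is immediate, since the lower-triangular factor has top row $(1,0)$, so the top row of $k_1$ is $a_1$ times the polynomial top row of the upper-triangular factor, and the unipotent-at-$0$ normalization gives $a(0)=a_1>0$. For $(a_1)\Rightarrow(c_1)$ I would seek $g(z)=\sum_{j=0}^n\bar y_jz^{-j}$ making both entries of the bottom row of $l^{-1}k_1$ polynomial; writing those entries as polynomials $p,q$ and eliminating $g$ reduces the problem to the B\'ezout equation $pb-qa=1$. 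This is solvable in polynomials exactly because $a$ and $b$ are coprime: a common zero $z_0\neq 0$ would force $a(z_0)a^*(z_0)+b(z_0)b^*(z_0)=0\neq 1$, and $z_0=0$ is excluded by $a(0)>0$. Choosing $p,q$ of minimal degree via the division algorithm then pins down $g$ in the stated form and fixes the degrees together with the unipotent and positivity normalizations of the three factors.

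The second trio follows from the first by conjugating with $J=\bigl(\begin{smallmatrix}0&1\\-1&0\end{smallmatrix}\bigr)$. One checks directly that $M\mapsto JMJ^{-1}$ carries the form in $(a_1)$ to the form in $(a_2)$ with $d=a$ and $c=-b$, and carries the $j$-th elementary factor of $(b_1)$ to the $j$-th elementary factor of $(b_2)$ with $\zeta_j=-\eta_j$ (note $a(\eta_j)=a(\zeta_j)$). The normalization $c(0)=0$ in $(a_2)$ corresponds precisely to $b(0)=0$, which removes the unique degree-zero ($z$-independent) factor from the product; this is exactly why the factorization in $(b_2)$ begins at $j=1$ rather than $j=0$. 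The triangular factorization $(c_2)$ transforms under $J$ in the same manner, with the upper-triangular correction taking the place of the lower-triangular one.

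I expect the main obstacle to be the $(a_1)\Rightarrow(c_1)$ direction, and within it the passage from mere solvability of $pb-qa=1$ to the \emph{precise} normalized form of the output: one must check that the minimal-degree B\'ezout solution yields a $g$ supported on the powers $z^0,\dots,z^{-n}$ with the exact degree bounds, so that $l$, the diagonal factor, and the upper-triangular polynomial all land in the stated shape. By contrast, the inductive arguments for $(a)\Leftrightarrow(b)$ are routine once the determinant identity $a_n\bar a_0+b_n\bar b_0=0$ and the reality of $a(0)$ are in hand.
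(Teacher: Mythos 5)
Your argument is essentially correct, but it takes a genuinely different route from the paper: Theorem \ref{SU(2)theorem1} is quoted here from \cite{Pi2} without proof, and the proof the paper does give of its higher-rank generalization (Theorem \ref{Ktheorem1}) runs along other lines. There, $(b_i)\Leftrightarrow(c_i)$ is proved by induction on the number of factors using the decomposition $N^+=\left(N^+\cap wN^-w^{-1}\right)\left(N^+\cap wN^+w^{-1}\right)$ in the affine setting (which simultaneously yields the product formula for the diagonal term), $(c_i)\Rightarrow(a_i)$ is read off from the action on highest/lowest weight vectors, and $(a_i)\Rightarrow(c_i)$ is obtained from a pointwise finite-dimensional LDU factorization of $k_i^{-1}$ controlled by the determinant-ratio formulas as in (\ref{lower}). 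Your coefficient-peeling proof of $(a_1)\Leftrightarrow(b_1)$ (resting on $a_n\bar a_0+b_n\bar b_0=0$ and the reality of $a(0)$) and your B\'ezout/coprimality proof of $(a_1)\Leftrightarrow(c_1)$ are sound and self-contained for $SU(2)$; what the paper's route buys is rank-independence, what yours buys is elementariness. The normalization issues you flag in $(a_1)\Rightarrow(c_1)$ are handled exactly as you expect: the residual freedom $(p,q)\mapsto(p+ra,q+rb)$, equivalently the constant term $\bar y_0$ of $g$, is what lets you impose $\gamma_1(0)=0$, and the two expressions $g=-(p+b^*)/a=(a^*-q)/b$ show that $z^ng$ is entire of degree at most $n$.

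The one place you are too quick is the passage to the second trio by conjugation with $J$. For $(b_2)$ you need, and do not verify, that $b(0)=0$ forces $\eta_0=0$ in the factorization produced by your peeling algorithm; this is true because the degree bounds on the partial products give $b(0)=-\bigl(\prod_{j}a(\eta_j)\bigr)\bar\eta_0$, but it is a property of the algorithm's output, not a formal consequence of conjugation. For $(c_2)$ the mismatch is sharper: $J(\cdot)J^{-1}$ turns the $(c_1)$ data into a first factor carrying a constant term $-\bar y_0$ (forbidden in $(c_2)$, where the sum starts at $j=1$) and a third factor that is unipotent \emph{lower} triangular at $z=0$. Both defects are repairable --- $b(0)=0$ forces $\beta_1(0)=0$, so the conjugated third factor is the identity at $z=0$, and the constant term can then be commuted across the diagonal factor and absorbed into the third factor without destroying its normalization --- but this bookkeeping is the real content of the step and is missing as written. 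Alternatively, and more cleanly, rerun your B\'ezout argument directly on the bottom row $(c,d)$ of $k_2$ with an upper-triangular correction; coprimality of $c$ and $d$ follows from $cc^*+dd^*=1$ together with $d(0)>0$ exactly as before.
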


For either $C^{\infty}$ loops, or for higher rank groups, there
are additional complications. For higher rank groups: in $(a_i)$
one has to consider multiple fundamental representations (for
$SU(2)$ there is just the defining representation); in $(b_i)$ the
factors correspond to a choice of a reduced sequence of simple
reflections in the affine Weyl group of $\dot K$ (for $SU(2)$
there is essentially one possibility); and in $(c_i)$ the first
factor is a more general unipotent matrix (for $SU(2)$ the
corresponding Lie algebra is abelian).

\begin{theorem}\label{U(2)theorem}
(a) If $\{\eta_i\}$ and $\{\zeta_j\}$ are rapidly decreasing
sequences of complex numbers, then the limits
$$k_1(z)=\lim_{n\to\infty}a(\eta_n)\left(\begin{matrix} 1&-\bar{\eta}_nz^n\\
\eta_nz^{-n}&1\end{matrix} \right)..a(\eta_0)\left(\begin{matrix}
1&
-\bar{\eta}_0\\
\eta_0&1\end{matrix} \right)$$ and
$$k_2(z)=\lim_{n\to\infty}a(\zeta_n)\left(\begin{matrix} 1&\zeta_nz^{-n}\\
-\bar{\zeta}_nz^n&1\end{matrix}
\right)..a(\zeta_1)\left(\begin{matrix} 1&
\zeta_1z^{-1}\\
-\bar{\zeta}_1z&1\end{matrix} \right),$$ exist in
$C^{\infty}(S^1,SU(2))$.

(b) Suppose $g\in C^{\infty}(S^1,SU(2))$. The following are
equivalent:

(i) $g$ has a triangular factorization $g=lmau$, where $l$ and $u$
have $C^{\infty}$ boundary values.

(ii) $g$ has a factorization of the form
$$g(z)=k_1(z)^*\left(\begin{matrix} e^{\chi}&0\\
0&e^{-\chi}\end{matrix}\right)k_2(z),$$ where $\chi \in
C^{\infty}(S^1,i\mathbb R)$, and $k_1$ and $k_2$ are as in (a).

\end{theorem}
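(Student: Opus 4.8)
\emph{Plan for part (a).} The goal is to upgrade convergence to the $C^\infty$ topology, for which it suffices to prove convergence in every Sobolev norm $H^s$, since $C^\infty(S^1)=\bigcap_{s\ge 0}H^s(S^1)$ with the inverse-limit topology. Write the $n$-th factor of $k_1$ as
$$A_n=a(\eta_n)\left(\begin{matrix}1&-\bar\eta_nz^n\\ \eta_nz^{-n}&1\end{matrix}\right),\qquad A_n-I=(a(\eta_n)-1)I+a(\eta_n)\left(\begin{matrix}0&-\bar\eta_nz^n\\ \eta_nz^{-n}&0\end{matrix}\right).$$
Since $a(\eta_n)-1=O(|\eta_n|^2)$ and $\|z^{\pm n}\|_{H^s}=(1+n^2)^{s/2}$, one gets a bound $\|A_n-I\|_{H^s}\le C_s\,(1+n)^{s}|\eta_n|$ for each $s$. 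Rapid decrease of $\{\eta_n\}$ makes $\sum_n(1+n)^s|\eta_n|$ finite for every $s$. As $H^s$ is a Banach algebra for $s>1/2$, the partial products satisfy $\|P_N-P_{N-1}\|_{H^s}\le\|A_N-I\|_{H^s}\,\|P_{N-1}\|_{H^s}$ with $\|P_{N-1}\|_{H^s}\le\exp\big(\sum_n\|A_n-I\|_{H^s}\big)$ uniformly bounded, so $\{P_N\}$ is Cauchy and converges in $H^s$; doing this for all $s$ gives convergence in $C^\infty$. The limit lies in $SU(2)$ because $SU(2)$ is closed and each $P_N$ is $SU(2)$-valued. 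The argument for $k_2$ is identical.

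\emph{Plan for part (b), (ii)$\Rightarrow$(i).} By Theorem \ref{SU(2)theorem1}, every truncation of $k_2$ (resp.\ $k_1$) has a triangular factorization of type $(c_2)$ (resp.\ $(c_1)$). I would first show these factorizations converge as the truncation grows: part (a) gives convergence of the loops, and on the big cell triangular factorization is a continuous (indeed smooth) operation, so the limits $k_1=l_1a_1u_1$ and $k_2=l_2a_2u_2$ have $C^\infty$ boundary values. Using $k_1^*=k_1^{-1}$ on $S^1$, write $k_1^*=u_1^{-1}a_1^{-1}l_1^{-1}$, and factor the scalar diagonal as $\mathrm{diag}(e^\chi,e^{-\chi})=D_-D_0D_+$ via the Wiener--Hopf splitting $\chi=\chi_-+\chi_0+\chi_+$ of the smooth function $\chi$, with $D_\mp$ holomorphic and nonvanishing on $\Delta^*,\Delta$. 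Then
$$g=u_1^{-1}a_1^{-1}\,\big(l_1^{-1}D_-\big)\,D_0\,\big(D_+l_2\big)\,a_2u_2 .$$
The outer factors already have the correct holomorphy type; the one genuine computation is to put the central product into triangular form, which I would do by a single further triangular factorization of the middle loop (again in the big cell) and then merge like-type factors, the leftover constant diagonal and monomial supplying $ma$. Because each step is a smooth operation on the big cell, all resulting factors retain $C^\infty$ boundary values, giving (i).

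\emph{Plan for part (b), (i)$\Rightarrow$(ii), and the main obstacle.} Conversely, given $g=lmau$ with $l,u$ having $C^\infty$ boundary values, I would reconstruct the parameters by a Schur-type iteration in which $\{\eta_j\}$ and $\{\zeta_j\}$ act as reflection coefficients: the extreme coefficient of the loop determines one root-subgroup factor, left/right multiplication by its inverse strips it off and lowers the degree, and the procedure is reapplied; the diagonal datum $\chi$ is read off from the limiting middle term. The \textbf{main obstacle}, and the quantitative heart of the theorem, is the regularity half of this dictionary: one must show that smoothness of $g$ forces the extracted parameters to be \emph{rapidly} decreasing. This is precisely the converse of the estimate in part (a), and I expect it to require a two-sided bound relating the size of the $j$-th parameter to the $j$-th Sobolev/Fourier data of $g$, together with uniform control showing the iteration neither loses regularity nor accumulates error in the tail. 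Establishing this equivalence between decay of the reflection parameters and smoothness of the loop is where the real work lies.
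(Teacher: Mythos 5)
Your part (a) is correct and is essentially the paper's argument (carried out for the higher--rank version in the proof of Theorem \ref{Ktheorem1smooth}): the $n$-th factor differs from $I$ by a term with Fourier support at $\pm n$, so each derivative costs a factor of $n$, which rapid decrease absorbs; your Sobolev/Banach-algebra bookkeeping is if anything cleaner than the paper's. The gaps are in part (b).

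For (ii)$\Rightarrow$(i) you take $k_1^*=k_1^{-1}=u_1^{-1}a_1^{-1}l_1^{-1}$ and are left with the middle loop $l_1^{-1}\,\mathrm{diag}(e^{\chi},e^{-\chi})\,l_2$, which you propose to factor ``again in the big cell.'' That loop is a lower-unipotent-valued loop times a diagonal times an upper-unipotent-valued loop, i.e.\ a perfectly general smooth $SL(2,\mathbb C)$-loop, and there is no a priori reason it lies in the big cell; asserting that it does is a problem of exactly the same kind as the one you are trying to solve for $g$. The step you are missing is to use the adjoint factorization $k_1^*=u_1^*\,a_1\,l_1^*$ instead of the inverse: since $l_1$ is $\dot N^-$-valued and holomorphic in $\Delta^*$, $l_1^*$ is $\dot N^+$-valued and holomorphic in $\Delta$, so the middle loop $b=l_1^*e^{\chi}l_2$ takes values in the Borel subgroup $\dot B^+$ and lies in the identity component of $C^{\infty}(S^1,\dot B^+)$. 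Birkhoff factorization for such a loop is unobstructed (split $\chi=\chi_-+\chi_0+\chi_+$ and solve an additive Riemann--Hilbert problem for the off-diagonal entry), and this is what makes the implication go through; it is the ``simple observation'' at the heart of the paper's proof of Theorem \ref{smooththeorem}. Separately, your claim that the truncated factorizations of $k_1,k_2$ converge because ``triangular factorization is continuous on the big cell'' needs the remark that $a_2=\prod a(\zeta_j)^{h_{\tau_j}}$ stays bounded away from $0$ (rapid decrease gives $\sum j|\zeta_j|^2<\infty$), so the limit does not leave the big cell.

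For (i)$\Rightarrow$(ii) your Schur-type peeling plan leaves open, as you yourself say, exactly the hard quantitative step, and it also attacks more than is needed: condition (ii) only requires exhibiting $k_1,k_2$ ``as in (a),'' and by Theorem \ref{SU(2)theorem1} (resp.\ Theorem \ref{Ktheorem1smooth}) this can be certified by producing triangular factorizations of the special forms $(c_1),(c_2)$ --- no extraction of the parameters $\eta_j,\zeta_j$ from $g$ is required for this implication. The paper's route is different and avoids the iteration entirely: perform a pointwise Iwasawa ($\dot N^+\dot A\dot K$) decomposition of $l^{-1}$ and of $u$ along the circle, Birkhoff-split the resulting $\dot A$-parts, define $k_1,k_2$ by twisting the pointwise $\dot K$-parts by the appropriate exponentials (so that they acquire triangular factorizations of types $(c_1),(c_2)$), and then observe that $k_1gk_2^{-1}$ is simultaneously $\dot K$-valued and $\dot B^+$-valued, hence $\dot T$-valued and of the form $\mathrm{diag}(e^{\chi},e^{-\chi})$ with $\chi$ smooth. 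The decay-versus-smoothness dictionary you identify is genuinely the delicate point of the theory, but it belongs to the equivalence $(c_2)\Leftrightarrow(b_2)$ for the unitary factors, not to the implication (i)$\Rightarrow$(ii) itself.
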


The form of the generalization of (b) to higher rank groups is
evident.

The plan of the paper is the following. In Section \ref{notation}
we establish notation. In Section \ref{Weylgroup} we define and
prove the existence of (affine periodic) reduced sequences of Weyl
reflections, which are relevant to higher rank generalizations of
Theorem \ref{SU(2)theorem1}. In Sections \ref{theorem1} and
\ref{theorem2} we formulate and prove higher rank generalizations
of Theorems \ref{SU(2)theorem1} and \ref{U(2)theorem},
respectively. In the last section we present some examples.

In this paper our main focus is on smooth loops. It is of great
interest to generalize these factorizations to other function
spaces, and to understand how decay properties of the parameters
correspond to regularity properties of the corresponding loops.
However, even in the $SU(2)$ case, this is only partially
understood (see \cite{Pi2}, especially Section 4).

\section{Notation and Background}\label{notation}

\subsection{Finite Dimensional Algebras and Groups}

$\dot{\mathfrak k}=Lie(\dot K)$, $\dot{\mathfrak g}=\dot{\mathfrak
k}^{\mathbb C}$, and $\dot{\mathfrak g} \to \dot{\mathfrak g}:x
\to -x^*$ is the anticomplex involution fixing $\dot{\mathfrak
k}$. To simplify the exposition, we assume that $\dot{\mathfrak
k}$ is simple. Fix a triangular decomposition
\begin{equation}\label{6.1}\dot {\mathfrak g}=\dot {\mathfrak n}^{-}\oplus\dot
{\mathfrak h}\oplus\dot {\mathfrak n}^{ +}\end{equation} such that
$\dot{\mathfrak t}=\dot{\mathfrak k} \cap \dot{\mathfrak h}$ is
maximal abelian in $\dot{\mathfrak k}$; this implies
$(\dot{\mathfrak n}^+)^*=\dot{\mathfrak n}^-$. We introduce the
following standard notations: $\{\dot{\alpha_j}: 1\le j\le r\}$ is
the set of simple positive roots, $\{\dot{h_j}\}$ is the set of
simple coroots, $\{\dot{\Lambda}_j\}$ is the set of fundamental
dominant weights, $\dot{\theta}$ is the highest root, $\dot W$ is
the Weyl group, and $\langle\cdot ,\cdot\rangle$ is the unique
invariant symmetric bilinear form such that (for the dual form)
$\langle\dot{\theta} ,\dot{\theta}\rangle = 2$. For each simple
root $\gamma$, fix a root homomorphism $i_{\gamma}:sl(2,\mathbb C)
\to \dot{\mathfrak g}$ (we denote the corresponding group
homomorphism by the same symbol), and let
$$f_{\gamma}=i_{\gamma}(\left (
\begin{matrix} 0&0\\1&0
\end{matrix} \right)),\quad e_{\gamma}=i_{\gamma}(\left (
\begin{matrix} 0&1\\0&0
\end{matrix} \right)), \quad \text{and} \quad \mathbf{r}_{\gamma}=i_{\gamma}(\left (
\begin{matrix} 0&i\\i&0
\end{matrix} \right)) \in \dot T=exp(\dot{\mathfrak t});$$
$\mathbf{r}_{\gamma}$ is a representative for the simple
reflection $r_{\gamma} \in \dot W$ corresponding to $\gamma$ (we
will adhere to the convention that representatives for Weyl group
elements will be denoted by bold letters).

Introduce the lattices
$$\hat{\dot T}=\bigoplus_{1\le i\le r} \mathbb Z \dot{\Lambda}_i \quad \text{(weight
lattice)}, \quad \text{and} \quad \check{\dot T}=\bigoplus_{1\le
i\le r} \mathbb Z \dot{h}_i \qquad \text{(coroot lattice)}.$$
These lattices and bases are in duality. Recall that the kernel of
$exp:\dot{\mathfrak t} \to \dot T$ is $2 \pi i$ times the coroot
lattice. Consequently there are natural identifications
$$\hat{\dot T} \to Hom(\dot T,\mathbb T),$$ where a weight $\dot{\Lambda}$
corresponds to the character $exp(2\pi ix) \to exp(2\pi
i\dot{\Lambda} (x))$, for $x\in \dot{\mathfrak h}_{\mathbb R}$,
and
$$\check{\dot
T} \to Hom(\mathbb T,\dot T),$$ where an element $h$ of the coroot
lattice corresponds to the homomorphism $\mathbb T\to \dot
T:exp(2\pi ix)\to exp(2\pi ixh)$, for $x\in \mathbb R$. Also
$$\hat{\dot{R}}=\bigoplus_{1\le i\le r} \mathbb Z \dot{\alpha}_i \quad \text{(root lattice)}, \quad
\text{and} \quad \check{\dot R}=\bigoplus_{1\le i\le r} \mathbb Z
\dot{\Theta}_i \quad \text{(coweight lattice)}$$ where these bases
are also in duality. The $\dot{\Theta}_i$ are the fundamental
coweights.

The affine Weyl group is the semidirect product $\dot W \propto
\check{\dot T}$. For the action of $\dot W$ on $\dot{\mathfrak
h}_{\mathbb R}$, a fundamental domain is the positive Weyl chamber
$C=\{x:\dot{\alpha}_i (x)>0, i=1,..,r\}$. For the natural affine
action
\begin{equation}\label{affineaction}\dot W \propto \check{\dot T}\times \dot{\mathfrak
h}_{\mathbb R} \to \dot{\mathfrak h}_{\mathbb R}\end{equation} a
fundamental domain is the convex set
$$C_0=\{x\in C:\dot{\theta}(x)<1\} \qquad
\text{(fundamental alcove)}$$ (see page 72 of \cite{PS} for the
$SU(3)$ case). The set of extreme points for the closure of $C_0$
is $\{0\}\cup\{\frac{1}{a_i}\dot{\Theta}_i\}$, where
$\dot{\theta}=\sum a_i \dot{\alpha}_i$ (these numbers are compiled
in Section 1.1 of \cite{KW}).

Let $h_{\dot{\delta}}=\sum_{i=1}^r \dot{\Theta}_i$. Then
$2h_{\dot{\delta}}\in \check{\dot T}$. For a positive root
$\dot{\alpha}$,
$\dot{\alpha}(h_{\dot{\delta}})=height(\dot{\alpha})$.

Let $\dot N^{\pm}=exp(\dot n^{\pm})$ and $\dot A=exp(\dot
h_{\mathbb R})$. An element $g\in \dot N^-\dot T\dot A\dot N^+$
has a unique triangular decomposition
\begin{equation}\label{finitefactorization}g=\dot l(g) \dot d(g) \dot u(g), \quad \text{where} \quad
\dot d=\dot m \dot a=\prod_{j=1}^r \dot {\sigma}_j(g)^{\dot
h_j},\end{equation} and $\dot {\sigma}_i(g)=\phi_i(\pi_{\dot
{\Lambda}_i}(g) v_{\dot{\Lambda}_i})$ is the fundamental matrix
coefficient for the highest weight vector corresponding to $\dot
{\Lambda}_i$.

\subsection{Affine Lie Algebras}

Let $L\dot{\mathfrak g}=C^{\infty}(S^1,\dot{\mathfrak g})$, viewed
as a Lie algebra with pointwise bracket. There is a universal
central extension
\[0\to \mathbb C c\to\tilde {L}\dot {\mathfrak g}\to L\dot {\mathfrak g}\to 0,\]
where as a vector space $\tilde {L}\dot {\mathfrak g}=L\dot
{\mathfrak g}\oplus \mathbb C c$, and in these coordinates
\begin{equation}\label{bracket}[X+\lambda c,Y+\lambda^{\prime} c]_{\tilde {L}\dot {\mathfrak g}}
=[X,Y]_{L\dot {\mathfrak g}}+\frac i{2\pi}\int_{S^1}\langle
X\wedge dY\rangle c. \end{equation} The smooth completion of the
untwisted affine Kac-Moody Lie algebra corresponding to
$\dot{\mathfrak g}$ is
$$\hat L\dot{\mathfrak g}=\mathbb C d\propto\tilde {L}\dot {\mathfrak g}
\qquad \text{(the semidirect sum)},$$ where the derivation $d$
acts by $d(X+\lambda c)=\frac 1i\frac d{d\theta}X$, for $X\in
L\dot {\mathfrak g}$, and $[d,c]=0$. The algebra generated by
$\dot {\mathfrak k}$-valued loops induces a central extension
\[0\to i\mathbb R c\to\tilde {L}\dot {\mathfrak k}\to L\dot {\mathfrak k}\to 0 \]
and a real form $\hat L\dot{\mathfrak k}=i\mathbb R d\propto\tilde
{L}\dot {\mathfrak k}$ for $\hat L\dot{\mathfrak g}$.

We identify $\dot {\mathfrak g}$ with the constant loops in $L\dot
{\mathfrak g}$. Because the extension is trivial over $\dot
{\mathfrak g}$, there are embeddings of Lie algebras
\[\dot {\mathfrak g}\to\tilde {L}\dot {\mathfrak g}\to \hat L\dot{\mathfrak g}. \]

There are triangular decompositions
\begin{equation}\label{looptriangulardecomposition}\tilde L\dot{\mathfrak
g} =\mathfrak n^{-}\oplus \mathfrak h \oplus \mathfrak n^{ +}
\quad \text{and} \quad \hat L\dot{\mathfrak g} =\mathfrak
n^{-}\oplus (\mathbb C d+\mathfrak h) \oplus \mathfrak n^{ +},
\end{equation} where $\mathfrak h=\dot {\mathfrak h}+\mathbb C c$
and $\mathfrak n^{\pm}$ is the smooth completion of
$\dot{\mathfrak n}^{\pm} +\dot{\mathfrak g}(z^{\pm 1}\mathbb
C[z^{\pm 1}])$, respectively. The simple roots for $(\hat
L_{fin}\dot{\mathfrak g},\mathbb C d+\mathfrak h)$ are
$\{\alpha_j: 0\le j\le r\}$, where
\[\alpha_0=d^*-\dot{\theta} ,\quad\alpha_j=\dot{\alpha}_j,\quad j>0, \]
$d^*(d)=1$, $d^*(c)=0$, $d^* (\dot {\mathfrak h})=0$, and the
$\dot{\alpha}_j$ are extended to $\mathbb C d+\mathfrak h$ by
requiring $\dot{\alpha}_ j(c)=\dot{\alpha}_ j(d)=0$. The simple
coroots are $\{h_j:0\le j\le rk\dot {\mathfrak g}\}$, where
\[h_0=c-\dot {h}_{\dot{\theta}},\quad h_j=\dot {h}_j,\quad j>0.\]
For $i>0$, the root homomorphism $i_{\alpha_i}$ is
$i_{\dot{\alpha}_i}$ followed by the inclusion $\dot {\mathfrak
g}\subset \tilde L \dot{\mathfrak g}$. For $i=0$
\begin{equation}\label{roothom}i_{\alpha_0}(\left(\begin{matrix} 0&0\\
1&0\end{matrix} \right))=e_{\dot{\theta}}z^{-1},\quad
i_{\alpha_0}(\left(\begin{matrix}
0&1\\
0&0\end{matrix} \right))=f_{\dot{\theta}}z, \end{equation} where
$\{f_{\dot{\theta}},\dot {h}_{\dot{\theta}},e_{\dot{\theta}}\}$
satisfy the $ sl(2,\mathbb C )$-commutation relations, and
$e_{\dot{\theta}}$ is a highest root for $\dot {\mathfrak g}$. The
fundamental dominant integral functionals on $\mathfrak h$ are
$\Lambda_j$, $j=0,..,r$.

Also set $\mathfrak t=i\mathbb R c \oplus \dot{\mathfrak t}$ and
$\mathfrak a=\mathfrak h_{\mathbb R}=\mathbb R c \oplus
\dot{\mathfrak h}_{\mathbb R}$.

\subsection{Loop Groups and Extensions}\label{loopgroups}

Let $\Pi:\tilde L \dot G \to L \dot G$ ($\Pi:\tilde L \dot K \to L
\dot K$) denote the universal central $\mathbb C^*$ ($\mathbb T$)
extension of the smooth loop group $L \dot G$ ($L \dot K$,
respectively), as in \cite{PS}. Let $N^{\pm}$ denote the subgroups
corresponding to $\mathfrak n^{\pm}$. Since the restriction of
$\Pi$ to $N^{\pm}$ is an isomorphism, we will always identify
$N^{\pm}$ with its image, e.g. $l\in N^+$ is identified with a
smooth loop having a holomorphic extension to $\Delta$ satisfying
$l(0)\in \dot N^+$. Also set $T=exp(\mathfrak t)$ and
$A=exp(\mathfrak a)$.

As in the finite dimensional case, for $\tilde {g}\in N^{-}\cdot T
A\cdot N^{+}\subset\tilde {L}\dot G$, there is a unique triangular
decomposition
\begin{equation}\label{diagonal}\tilde {g}=l\cdot d \cdot u,\quad \text{where}\quad d
=ma=\prod_{j=0}^r\sigma_j(\tilde { g})^{h_j},\end{equation} and
$\sigma_j=\sigma_{\Lambda_j}$ is the fundamental matrix
coefficient for the highest weight vector corresponding to
$\Lambda_j$. If $\Pi(\tilde {g})=g$, then because
$\sigma_0^{h_0}=\sigma_0^{c-\dot {h}_{\dot{\theta}}}$ projects to
$\sigma_ 0^{-\dot {h}_{\dot{\theta}}}$, $g=l\cdot \Pi(d)\cdot u$,
where
\begin{equation}\label{loopdiagonal}\Pi(d)(g)=
\sigma_0(\tilde {g})^{-\dot
{h}_{\dot{\theta}}}\prod_{j=1}^r\sigma_j(\tilde {g})^{\dot
{h}_j}=\prod_{j=1}^r\left (\frac {\sigma_j(\tilde
{g})}{\sigma_0(\tilde {g})^{\check {a}_j}}\right )^{\dot {h}_j},
\end{equation} and the $\check {a}_j$ are positive integers such
that $\dot {h}_{\dot{\theta}}=\sum\check {a}_j\dot {h}_j$ (these
numbers are also compiled in Section 1.1 of \cite{KW}).

If $\tilde{g} \in \tilde{L}\dot K$, then
$\vert\sigma_j(\tilde{g})\vert$ depends only on $g=\Pi(\tilde g)$.
We will indicate this by writing
\begin{equation}\label{diagonalnotation}\vert\sigma_j(\tilde{g})\vert=\vert\sigma_j\vert(g).\end{equation}
This also implies $a(\tilde g)=a(g)$.

For later reference we summarize this discussion in the following
way.

\begin{lemma}\label{notationlemma} For $\tilde g\in \tilde L\dot K$ and $g=\Pi(\tilde g)$,
$\tilde g$ has a triangular factorization if and only if $g$ has a
triangular factorization. The restriction of the projection
$\tilde{L}\dot K \to L\dot K$ to elements with $m(\tilde g)=1$ is
injective.
\end{lemma}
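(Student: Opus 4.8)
The plan is to deduce both assertions from three structural facts recorded above: on $\tilde L\dot K$ the kernel of $\Pi$ is the central circle $\exp(i\mathbb R c)$ (and on the complexification $\tilde L\dot G\to L\dot G$ it is $\exp(\mathbb C c)$); the restriction of $\Pi$ to $N^{\pm}$ is an isomorphism onto its image; and the extension is trivial over the constant loops, so that $\dot T\dot A$, and indeed $\exp(\mathbb C c)$, embed in $TA$. For the forward implication of the first assertion, suppose $\tilde g=l\,d\,u$ with $l\in N^-$, $d\in TA$, $u\in N^+$. Applying $\Pi$ gives $g=\Pi(l)\Pi(d)\Pi(u)$; the outer factors land in the loop groups $N^{\pm}$ since $\Pi$ is an isomorphism there, while $\Pi(d)\in\dot T\dot A$ because $\Pi$ collapses the central directions $\exp(\mathbb C c)\subset TA$ (explicitly, $\Pi(d)$ is the element displayed in \eqref{loopdiagonal}). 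This is a triangular factorization of $g$.

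For the converse I would lift a given factorization rather than factor $\tilde g$ directly. Given $g=l'\,a'\,u'$, use the isomorphism $\Pi|_{N^{\pm}}$ to lift $l'$ and $u'$ to the unique $\tilde l\in N^-$ and $\tilde u\in N^+$, and \emph{define} $d:=\tilde l^{-1}\tilde g\,\tilde u^{-1}$, so that $\tilde g=\tilde l\,d\,\tilde u$ holds by construction. Then $\Pi(d)=a'$ is a constant loop in $\dot T\dot A$, so $d$ is a lift of a constant diagonal element. Since both $\dot T\dot A$ and the central $\exp(\mathbb C c)$ lie inside $TA$, every such lift lies in $TA$; hence $\tilde g=\tilde l\,d\,\tilde u$ is the required triangular factorization, completing the equivalence.

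For the injectivity statement, suppose $\tilde g_1,\tilde g_2$ carry triangular factorizations with $m(\tilde g_i)=1$ and $\Pi(\tilde g_1)=\Pi(\tilde g_2)$, so that $\tilde g_1=z\,\tilde g_2$ for a unique $z\in\exp(i\mathbb R c)$. Writing $\tilde g_2=l\,m_2a_2\,u$ and using centrality of $z$ gives $\tilde g_1=l\,(zm_2)a_2\,u$ with $zm_2\in T$ and $a_2\in A$. The crux is that the product map $T\times A\to TA$ is a bijection: $T$ is compact while $A=\exp(\mathfrak a)$ is a simply connected vector group, so $T\cap A=\{1\}$, and the factorization $d=ma$ is therefore unique, giving $m(\tilde g_1)=zm_2$. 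Imposing $m(\tilde g_1)=m(\tilde g_2)=1$ forces $z=1$, i.e. $\tilde g_1=\tilde g_2$. I expect the only genuine subtlety to be precisely this last point: verifying that the central circle is absorbed entirely into the compact factor $m$ of the Cartan decomposition, so that the normalization $m(\tilde g)=1$ rigidifies the lift. The remaining steps are bookkeeping with the three structural facts above.
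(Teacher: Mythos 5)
Your proof is correct. Note that the paper offers no formal proof of this lemma at all: it is stated as a summary of the discussion in Section \ref{loopgroups}, where the forward implication is exactly your first paragraph (the factor $\Pi(d)$ computed in \eqref{loopdiagonal}), and the injectivity is implicitly extracted from the observation \eqref{diagonalnotation} that $\vert\sigma_j(\tilde g)\vert$ depends only on $g=\Pi(\tilde g)$, so that $a(\tilde g)=a(g)$ and hence, once $m(\tilde g)=1$ is imposed, the whole diagonal factor $d=a$ is determined by $g$ (with $l$ and $u$ determined via $\Pi|_{N^{\pm}}$). You replace that matrix-coefficient argument with a direct group-theoretic one: the kernel of $\Pi$ is central, so it shifts only the $T$-component of the unique factorization, and $T\cap A=\{1\}$ kills the ambiguity when $m=1$. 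Your route is more elementary and self-contained (it never invokes the $\sigma_j$), and your lifting argument for the converse, defining $d:=\tilde l^{-1}\tilde g\,\tilde u^{-1}$ and checking $\Pi(d)\in\dot T\dot A$ forces $d\in TA$ because $\ker\Pi\subset T$ and the extension is trivial over constant loops, is a clean way to make explicit what the paper leaves to the phrase ``as in the finite dimensional case.'' What the paper's formulation buys in exchange is the quantitative identity $a(\tilde g)=a(g)$, which is not merely a step toward injectivity but is used later (e.g.\ in the product formulas for $a_1,a_2$ and in Theorem \ref{smooththeorem}(b)); your argument establishes injectivity without recording that identity, so if you were writing this for the paper you would still want to note $a(\tilde g)=a(g)$ separately.
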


\section{Reduced Sequences in the Affine Weyl Group}\label{Weylgroup}

The Weyl group $W$ for $(\hat L\dot{\mathfrak g},\mathbb
Cd+\mathfrak h)$ acts by isometries of $(\mathbb R d+\mathfrak
h_{\mathbb R},\langle \cdot,\cdot \rangle)$. The action of $W$ on
$\mathbb R c$ is trivial. The affine plane $d+\dot{\mathfrak h}$
is $W$-stable, and this action identifies $W$ with the affine Weyl
group and its affine action (\ref{affineaction}) (see Chapter 5 of
\cite{PS}). In this realization
\begin{equation}\label{r0eqn}r_{\alpha_0}= r_{\dot{\theta}} \circ
\dot h_{\dot{\theta}},\quad \text{and}\quad
r_{\alpha_i}=r_{\dot{\alpha_i}},\quad i>0.\end{equation}

\begin{definition}\label{minimal} A sequence of simple reflections
$r_1,r_2,..$ in $W$ is called reduced if $w_n=r_nr_{n-1}..r_1$ is
a reduced expression for each $n$.
\end{definition}

The following is well-known:

\begin{lemma}\label{rootlemma}Given a reduced sequence of simple
reflections $\{r_j\}$, corresponding to simple positive roots
$\gamma_j$,

(a) the positive roots which are mapped to negative roots by $w_n$
are
$$\tau_j=w_{j-1}^{-1}\cdot\gamma_j=r_1..r_{j-1}\cdot\gamma_j,\quad j=1,..,n.$$

(b) $w_{k-1}\tau_n=r_k..r_{n-1}\gamma_n>0$, $k<n$.
\end{lemma}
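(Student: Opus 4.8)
The plan is to treat this as a statement about the Coxeter group $W$ and its real root system, so that the standard length/inversion calculus applies; the only thing to keep straight is the ordering convention, since $w_n=r_n\cdots r_1$ applies $r_1$ first. For $w\in W$ write $R(w)=\{\beta>0:w\beta<0\}$. I will use three standard facts, all valid for any Coxeter group and hence for $W$ acting on the real roots of $\hat L\dot{\mathfrak g}$: a simple reflection $r_\gamma$ permutes the positive roots other than $\gamma$ while sending $\gamma$ to $-\gamma$; one has $\ell(w)=|R(w)|$; and the length criteria $\ell(r_\gamma w)=\ell(w)+1\iff w^{-1}\gamma>0$ and $\ell(wr_\gamma)=\ell(w)+1\iff w\gamma>0$, for $\gamma$ simple.

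For (a) I would induct on $n$, showing $R(w_n)=\{\tau_1,\dots,\tau_n\}$. Since the sequence is reduced, $\ell(w_n)=\ell(w_{n-1})+1$, so the left-hand criterion gives $w_{n-1}^{-1}\gamma_n>0$, i.e. $\tau_n>0$; moreover $w_{n-1}\tau_n=\gamma_n>0$, so $\tau_n\notin R(w_{n-1})$. Writing $w_n=r_nw_{n-1}$ and splitting a positive root $\beta$ according to the sign of $\mu=w_{n-1}\beta$, the permutation property of $r_n$ shows that $w_n\beta<0$ holds exactly when either $\mu<0$ (the subcase $\mu=-\gamma_n$ being excluded, since it would force $\beta=-\tau_n<0$) or $\mu=\gamma_n$; the first alternative is $\beta\in R(w_{n-1})$ and the second is $\beta=\tau_n$. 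Hence $R(w_n)=R(w_{n-1})\cup\{\tau_n\}$, and the induction hypothesis together with $|R(w_n)|=\ell(w_n)=n$ identifies this set with $\{\tau_1,\dots,\tau_n\}$, where $\tau_j=w_{j-1}^{-1}\gamma_j=r_1\cdots r_{j-1}\gamma_j$.

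For (b) I would first record the purely algebraic cancellation
$$w_{k-1}\tau_n=(r_{k-1}\cdots r_1)(r_1\cdots r_{n-1})\gamma_n=r_k\cdots r_{n-1}\gamma_n,$$
which is the asserted formula. To see positivity, note that any consecutive subword of a reduced expression is reduced, so $r_k\cdots r_{n-1}r_n$ is reduced of length $n-k+1$; in particular $\ell\big((r_k\cdots r_{n-1})r_n\big)=\ell(r_k\cdots r_{n-1})+1$. Applying the right-hand length criterion with $w=r_k\cdots r_{n-1}$ and the simple root $\gamma_n$ of $r_n$ yields exactly $(r_k\cdots r_{n-1})\gamma_n>0$, which is (b).

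The content is entirely standard Coxeter theory, so I do not anticipate a genuine obstacle. The one point deserving explicit comment is that these are assertions about the real roots of an affine algebra rather than a finite root system, so I would note that $W$ is a Coxeter group acting on its real root system and that the finite-type proofs of the three quoted facts transfer verbatim. The secondary thing to watch is the bookkeeping of order conventions, namely that $w_n$ applies $r_1$ first, which is what forces the use of $w^{-1}$ in the inversion formula of (a) and of the right-hand length criterion in (b).
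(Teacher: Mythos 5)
Your proof is correct. The paper itself offers no argument here --- it simply labels the lemma ``well-known'' --- so there is nothing to compare against; your inductive computation of the inversion set $R(w_n)=R(w_{n-1})\cup\{\tau_n\}$ for part (a), and the cancellation plus the right-multiplication length criterion for part (b), is exactly the standard Coxeter-theoretic argument the authors are implicitly invoking, and your remark that it applies verbatim to the affine Weyl group acting on real roots is the right point to flag.
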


A reduced sequence of simple reflections determines a
non-repeating sequence of adjacent alcoves
\begin{equation}\label{alcovewalk}C_0, w_1^{-1} C_0,..,
w_{n-1}^{-1}C_0=r_1..r_{n-1}C_0,..,\end{equation} where the
step from $w_{n-1}^{-1}C_0$ to $w_n^{-1}C_0$ is implemented by the
reflection $r_{\tau_n}=w_{n-1}^{-1}r_nw_{n-1}$ (in particular the
wall between $C_{n-1}$ and $C_n$ is fixed by $r_{\tau_n}$).
Conversely, given a sequence of adjacent alcoves $(C_j)$ which is
minimal in the sense that the minimal number of steps to go from
$C_0$ to $C_j$ is $j$, there is a corresponding reduced sequence
of reflections.

\begin{definition}A reduced sequence of simple reflections
$\{r_j\}$ is affine periodic if, in terms of the identification of
$W$ with the affine Weyl group, (1) there exists $l$ such that
$w_l \in \check{\dot T}$ and (2) $w_{s+l}= w_s \circ w_l$, for all
$s$. We will refer to $w_l$ as the period ($l$ is the length of
the period).

\end{definition}

\begin{remarks} (a) The second condition is equivalent to
periodicity of the associated sequence of simple roots
$\{\gamma_j\}$, i.e. $\gamma_{s+l}=\gamma_s$.

(b) In terms of the associated walk through alcoves, affine
periodicity means that the walk from step $l+1$ onward is the
original walk translated by $w_l^{-1}$.
\end{remarks}

\begin{theorem}\label{periodic}(a) There exists an affine
periodic reduced sequence $\{r_j\}$ of simple reflections such
that, in the notation of Lemma \ref{rootlemma},
\begin{equation}\label{flips}\{\tau_j:1\le j<\infty\}=\{k d^* -\dot{\alpha
}:\dot{\alpha }>0,k=1,2,..\},\end{equation} i.e. such that the
span of the corresponding root spaces is $\dot{\mathfrak
n}^{-}(z\mathbb C[z])$. The period can be chosen to be any point
in $C \cap \check{\dot T}$.

(b) Given a reduced sequence as in (a), and a reduced expression
for $\dot {w}_0=r_{-N}..r_0$ (where $\dot w_0$ is the longest
element of $\dot W$), the sequence
$$r_{-N},..,r_0,r_1,..$$
is another reduced sequence. The corresponding set of positive
roots mapped to negative roots is
$$\{k d^* +\dot{\alpha }:\dot{\alpha }>0,k=0,1,..\}
,$$ i.e. the span of the corresponding root spaces is $\dot
{\mathfrak n}^{+}(\mathbb C[z])$.

\end{theorem}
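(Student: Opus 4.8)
The plan is to construct the affine periodic reduced sequence explicitly from a chosen period and then verify the two defining properties (the reduced/periodicity condition and the identification of flipped roots) by a combination of the alcove-walk picture and the combinatorics of Lemma \ref{rootlemma}. Fix a point $t \in C \cap \check{\dot T}$; this will be the translation $w_l$. Since $t$ lies in the open chamber $C$ (hence $\dot\alpha(t)>0$ for every positive root), the translation by $t$ is an element of the affine Weyl group whose length equals the number of affine hyperplanes separating $C_0$ from its translate $t+C_0$. I would first choose any minimal gallery (sequence of adjacent alcoves) from $C_0$ to $t + C_0$; by the converse statement following (\ref{alcovewalk}), such a minimal gallery corresponds to a reduced sequence of simple reflections $r_1,\dots,r_l$ with $w_l = r_l\cdots r_1$ equal to translation by $t$ (up to the identification of $W$ with the affine Weyl group). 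Then I would extend this finite reduced word to a bi-infinite sequence by declaring $\gamma_{s+l} = \gamma_s$, i.e. periodically repeating the block of simple roots.

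The first substantive step is to check that this periodic extension is genuinely reduced, i.e. that $w_{s+l} = w_s \circ w_l$ and that each $w_n$ is a reduced expression. Here the geometric content is Remark (b): the walk from step $l$ onward is the original walk translated by $w_l^{-1}$ (equivalently, translation by $-t$ in the affine picture). Because $t \in C$, the chamber direction is strictly dominant, so iterating the translation never forces the gallery to backtrack across a wall it has already crossed; concretely, the hyperplanes separating $C_0$ from $w_n^{-1}C_0$ are exactly those one accumulates block by block, with no cancellation, which is precisely the statement that lengths add: $\ell(w_{s+l}) = \ell(w_s) + \ell(w_l)$. This additivity is what makes the concatenated word reduced, and it is the natural place to invoke the standard fact that for a dominant translation the product with any Weyl word has additive length.

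With reducedness in hand, the identification (\ref{flips}) follows from Lemma \ref{rootlemma}(a): the roots flipped by $w_n$ are the $\tau_j = r_1\cdots r_{j-1}\cdot \gamma_j$. The key is to understand the full set $\{\tau_j : 1 \le j < \infty\}$ as $n \to \infty$. The positive roots of $(\hat L\dot{\mathfrak g}, \mathbb C d + \mathfrak h)$ are of the form $kd^* + \dot\alpha$ with appropriate sign constraints; a positive root is sent to a negative root by enough of the $w_n$ precisely when it lies in the span cut out by the translation direction. Using $r_{\alpha_0} = r_{\dot\theta}\circ \dot h_{\dot\theta}$ from (\ref{r0eqn}) and the fact that $w_l$ is translation by $t \in \check{\dot T}$, I would show that translation by $t$ eventually flips exactly the roots $kd^* - \dot\alpha$ with $\dot\alpha > 0$ and $k \ge 1$ (the finite roots $\dot\alpha$ themselves, being $d^*$-degree zero, are governed by the $\dot W$-part and are not flipped by a pure translation into the dominant direction). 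Periodicity then guarantees that every such root is flipped at some finite stage, and Lemma \ref{rootlemma}(b) guarantees no root is flipped twice; together these give the exact equality of sets in (\ref{flips}), and hence that the span of the root spaces is $\dot{\mathfrak n}^-(z\mathbb C[z])$.

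For part (b), I would prepend a reduced word $r_{-N},\dots,r_0$ for the longest element $\dot w_0$ of the finite Weyl group $\dot W$ to the sequence from (a). The point is that $\dot w_0$ flips precisely the finite positive roots $\dot\alpha > 0$ (the $k=0$ part), while the tail from (a) contributes the $kd^* - \dot\alpha$ with $k \ge 1$; after conjugating/reindexing by $\dot w_0$ and using $\dot w_0 \cdot \dot\alpha$ ranges over all negative finite roots, the combined flipped set becomes $\{kd^* + \dot\alpha : \dot\alpha > 0, k \ge 0\}$, whose root spaces span $\dot{\mathfrak n}^+(\mathbb C[z])$. The step I expect to be the main obstacle is establishing reducedness of the \emph{prepended} sequence, i.e. that concatenating the $\dot w_0$ word with the periodic tail still yields reduced $w_n$ for all $n$; this requires checking that no flip is undone across the junction, which I would verify by confirming that the flipped-root sets of the two pieces are disjoint (the finite roots versus the strictly positive-$d^*$-degree roots) so that lengths again add.
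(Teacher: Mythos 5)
Your argument is correct and follows essentially the same route as the paper: choose a minimal alcove walk from $C_0$ to $C_0+h$ (which necessarily stays in $C$), continue it periodically so that it moves arbitrarily deep into $C$, and read off the flipped roots as the affine hyperplanes $\dot{\alpha}=k$ crossed, none of which is a wall of $C$. Your explicit checks of reducedness across the periodic blocks and of part (b) via disjointness of the inversion sets of $\dot w_0$ and of the periodic tail are in fact more detailed than the paper's, which treats these points as immediate.
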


\begin{proof} Suppose that we are given a
reduced sequence $\{r_j\}$ of simple reflections. In terms of the
corresponding sequence of adjacent alcoves (\ref{alcovewalk}),
(\ref{flips}) holds if and only if the alcoves are all contained
in $C$, and asymptotically the alcoves are infinitely far from the
walls of $C$. In the case of $SU(2)$, there is a unique such
reduced sequence; in all higher rank cases, there are infinitely
many such reduced sequences.

Now suppose that $h \in C \cap \check{\dot T}$, e.g.
$h=2h_{\dot{\delta}}$. To obtain an affine periodic reduced
sequence with period $h$, choose a finite reduced sequence of
reflections $r_1$,..,$r_d$ such that $r_1..r_d C_0=C_0+h$. This is
equivalent to doing a minimal walk through alcoves, all contained
in $C$, from $C_0$ to its translate by $h$. Note that the closure
of the alcove $C_0+h$ is contained in $C$. We then continue the
walk through alcoves periodically, that is, after the second set
of $d$ steps we land at the translate of $C_0$ by $2h$ and so on.
This periodic walk has the property that we are eventually
arbitrarily deep in the interior of $C$, and this property implies
(\ref{flips}). This implies (a).

Given (a), part (b) is clear.

\end{proof}

\begin{remarks} (a) If $h_{\dot{\delta}}\in \check{\dot T}$,
then $h_{\dot{\delta}}$ is the point in $C\cap\check{\dot T}$ with
shortest Weyl group length (and also closest to the origin in the
Euclidean sense). If this is not the case, conjecturally
$2h_{\dot{\delta}}$ is the point in $C\cap\check{\dot T}$ with
shortest Weyl group length.

(b) Via the exponential map $2\pi i \check{\dot R}/2\pi
i\check{\dot T}$ is isomorphic to $C(\dot K)$. Thus $exp(2\pi i
h_{\dot{\delta}})$ is always central, and $h_{\dot{\delta}}\in
\check{\dot T}$ if and only if $exp(2\pi i h_{\dot{\delta}})=1$.
This is the case if and only if $\dot{\mathfrak g}$ is of type
$A_l$, $l$ even, $G_2$, $F_4$, $E_6$ or $E_8$.

(c) Given a point $h\in \check{\dot T}$, the Weyl group length is
the number of hyperplanes associated to Weyl group reflections
crossed by the straight line from the origin to $h$. When
$h_{\dot{\delta}}\in \check{\dot T}$, its length is
$\sum_{\dot{\alpha}>0}height(\dot{\alpha})$.
\end{remarks}

\section{Generalizations of Theorem \ref{SU(2)theorem1}}\label{theorem1}

Throughout this section we assume that we have chosen a reduced
sequence $\{r_j\}$ as in Theorem \ref{periodic}, and a reduced
expression $\dot w_0=r_{-N}..r_0$ (In Theorem \ref{Ktheorem1}
below, it is not necessary to assume that the sequence is affine
periodic. We will use affine periodicity in Theorem
\ref{Ktheorem1smooth}, and it seems plausible that this use is
essential). We set
$$i_{\tau_n}=\mathbf w_{n-1} i_{\gamma_n} \mathbf
w_{n-1}^{-1}, \qquad n=1,2,..$$
$$i_{\tau_{-N}^{\prime}}=i_{\gamma_{-N}},\quad
i_{\tau_{-(N-1)}^{\prime}}=\mathbf
r_{-N}i_{\gamma_{-(N-1)}}\mathbf r_{-N}^{-1},..,\quad
i_{\tau_0^{\prime}}=\dot{\mathbf w}_0 i_{\gamma_0} \dot{\mathbf
w}_0^{-1}
$$ and for $n>0$
$$i_{\tau_n^{\prime}}=\dot{\mathbf w}_0 \mathbf w_{n-1}
i_{\gamma_n} \mathbf w_{n-1}^{-1} \dot{\mathbf w}_0^{-1}.$$

Also for $\zeta\in \mathbb C$, let
$a(\zeta)=(1+\vert\zeta\vert^2)^{-1/2}$ and
\begin{equation}\label{kfactor}k(\zeta)=a(\zeta)\left (
\begin{matrix}1&-\bar{\zeta}\\\zeta&1
\end{matrix}\right)=\left ( \begin{matrix}1&0\\\zeta&1
\end{matrix}\right)\left ( \begin{matrix}a(\zeta)&0\\0&a(\zeta)^{-1}
\end{matrix}\right)\left ( \begin{matrix}1&-\bar{\zeta}\\0&1
\end{matrix}\right) \in SU(2).\end{equation}

\begin{theorem}\label{Ktheorem1} Suppose that $\tilde{k}_1 \in \tilde{L}_{fin}\dot{K}$
and $\Pi(\tilde{k}_1)=k_1$. The following are equivalent:

($a_1$) $m(\tilde{k}_1)=1$; and for each complex irreducible
representation $V(\pi)$ for $\dot{G}$, with lowest weight vector
$\phi \in V(\pi)$, $\pi(k_1)^{-1}(\phi)$ is a polynomial in $z$
(with values in $V$), and is a positive multiple of $v$ at $z=0$.

($b_1$) $\tilde{k}_1$ has a factorization of the form
$$\tilde{k}_1=i_{\tau'_n}(k(\eta_n))..i_{\tau'_{-N}}(k(\eta_{-N}))\in \tilde{L}_{fin}\dot K$$
for some $\eta_j \in \mathbb C$.

($c_1$) $\tilde{k}_1$ has triangular factorization of the form
$\tilde k_1=l_1a_1u_1$ where $l_1\in \dot N^-(\mathbb C[z^{-1}])$.

Moreover, in the notation of ($b_1$),
$$a_1=\prod a(\eta_j)^{h_{\tau^{\prime}_j}}.$$

Similarly, the following are equivalent:

($a_2$) $m(\tilde k_2)=1$, and for each complex irreducible
representation $V(\pi)$ for $\dot{G}$, with highest weight vector
$v \in V(\pi)$, $\pi(k_2)^{-1}(v)$ is a polynomial in $z$ (with
values in $V$), and is a positive multiple of $v$ at $z=0$.

($b_2$) $\tilde k_2$ has a factorization of the form
$$\tilde k_2=i_{\tau_n}(k(\zeta_n))..i_{\tau_1}(k(\zeta_1))$$
for some $\zeta_j \in \mathbb C$.

($c_2$) $\tilde k_2$ has triangular factorization of the form
$\tilde k_2=l_2a_2u_2$, where $l_2 \in \dot N^+(z^{-1}\mathbb C
[z^{-1}])$.

Also, in the notation of ($b_2$),
\begin{equation}\label{productformula}a_2=\prod a(\zeta_j)^{h_{\tau_j}}.\end{equation}

\end{theorem}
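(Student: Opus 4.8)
The plan is to prove the chain $(b_2)\Rightarrow(c_2)\Rightarrow(a_2)\Rightarrow(b_2)$ in full, and then to deduce the assertions about $\tilde k_1$ from those about $\tilde k_2$ by conjugating with the representative $\dot{\mathbf w}_0$. The point of the latter reduction is that, by the definitions preceding the statement, $i_{\tau'_n}=\dot{\mathbf w}_0\, i_{\tau_n}\,\dot{\mathbf w}_0^{-1}$ for $n>0$, while the initial factors $i_{\tau'_{-N}},\dots,i_{\tau'_0}$ realize a reduced expression for $\dot w_0$; since $\mathrm{Ad}(\dot{\mathbf w}_0)$ interchanges $\dot N^{+}\leftrightarrow\dot N^{-}$ and carries highest weight vectors to lowest weight vectors, it converts conditions $(a_2),(b_2),(c_2)$ into $(a_1),(b_1),(c_1)$. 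The appearance of $\dot N^{-}(\mathbb C[z^{-1}])$ (with a constant term) for $k_1$, versus $\dot N^{+}(z^{-1}\mathbb C[z^{-1}])$ (without one) for $k_2$, is exactly the effect of the extra finite factors coming from $\dot w_0$, as recorded in Theorem \ref{periodic}(b). Thus I would treat the $\tilde k_2$ equivalences as the substance.

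For $(b_2)\Rightarrow(c_2)$ and the product formula I would induct on the number $n$ of factors. By (\ref{kfactor}), applying $i_{\tau_j}$ splits each factor as
\[ i_{\tau_j}(k(\zeta_j))=\exp(\zeta_j f_{\tau_j})\; a(\zeta_j)^{h_{\tau_j}}\;\exp(-\bar\zeta_j e_{\tau_j}). \]
Writing $\tau_j=k_j d^{*}-\dot\alpha^{(j)}$ with $\dot\alpha^{(j)}>0$, $k_j\ge 1$ as in (\ref{flips}), one has $e_{\tau_j}\in\dot{\mathfrak n}^{-}(z\mathbb C[z])$ and $f_{\tau_j}\in\dot{\mathfrak n}^{+}(z^{-1}\mathbb C[z^{-1}])$, so $\exp(\zeta_j f_{\tau_j})\in \dot N^{+}(z^{-1}\mathbb C[z^{-1}])\subset N^{-}$ and $\exp(-\bar\zeta_j e_{\tau_j})\in N^{+}$. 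Given $P_{n-1}=L_{n-1}D_{n-1}U_{n-1}$, the inductive step multiplies the new factor on the left and commutes the piece $\exp(-\bar\zeta_n e_{\tau_n})$ rightward past $L_{n-1}$; the remaining reorderings (of $A$ past $N^{\pm}$) are routine and preserve the form of the factors, yielding $D_n=a(\zeta_n)^{h_{\tau_n}}D_{n-1}$ and $L_n\in\dot N^{+}(z^{-1}\mathbb C[z^{-1}])$, hence $a_2=\prod_j a(\zeta_j)^{h_{\tau_j}}$. The crux is that $\exp(-\bar\zeta_n e_{\tau_n})\,L_{n-1}$ lies in $N^{-}\!\cdot N^{+}$ with \emph{trivial} Cartan part, so the diagonal is unchanged apart from the new factor. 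This is exactly where reducedness is essential: $\tau_1,\dots,\tau_n$ are distinct positive roots and, by Lemma \ref{rootlemma}(b), $w_{k-1}\tau_n>0$ for $k<n$, which is the positivity that keeps every iterated bracket of $e_{\tau_n}$ against the root vectors building $L_{n-1}$ inside $\mathfrak n^{+}$. I expect this commutation bookkeeping to be the main obstacle.

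$(c_2)\Rightarrow(a_2)$ is then immediate. Projecting $\tilde k_2=l_2a_2u_2$ by $\Pi$ gives $k_2=\hat l_2\hat a_2\hat u_2$, where $\hat l_2$ is pointwise valued in $\dot N^{+}$ and polynomial in $z^{-1}$, $\hat a_2\in\dot A$ (so $m(\tilde k_2)=1$), and $\hat u_2\in N^{+}$ is holomorphic on $\Delta$ with $\hat u_2(0)\in\dot N^{+}$. For a highest weight vector $v$, pointwise $\pi(\hat l_2(z))^{-1}v=v$ since $\dot N^{+}$ fixes $v$, and $\pi(\hat a_2)^{-1}v=\hat a_2^{-\Lambda}v$ with $\hat a_2^{-\Lambda}>0$; hence $\pi(k_2)^{-1}v=\hat a_2^{-\Lambda}\,\pi(\hat u_2)^{-1}v$. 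This is holomorphic on $\Delta$, and therefore, being also a finite Fourier series, a polynomial in $z$; at $z=0$ it equals $\hat a_2^{-\Lambda}v$, a positive multiple of $v$.

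Finally, for $(a_2)\Rightarrow(b_2)$ I would first recover the triangular factorization from $(a_2)$. Reversing the previous computation, the holomorphy and positivity in $(a_2)$ identify $\hat a_2$ from the value at $z=0$ and force $\tilde k_2\in N^{-}AN^{+}$ with $m=1$ and with lower factor $l_2\in\dot N^{+}(z^{-1}\mathbb C[z^{-1}])$ polynomial in $z^{-1}$; here finiteness of the Fourier series and uniqueness of the triangular factorization are used. This gives $(c_2)$. One then extracts the elementary factors by peeling from the right: the parameter $\zeta_1$ of the innermost factor $i_{\tau_1}(k(\zeta_1))=i_{\gamma_1}(k(\zeta_1))$ is read off from the factorization data (equivalently, from the extreme Fourier coefficient governed by the simple root $\gamma_1$), and $\tilde k_2\, i_{\gamma_1}(k(\zeta_1))^{-1}$ again satisfies the hypotheses with strictly smaller degree. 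The order $\{r_j\}$ prescribes which simple reflection governs each step, and finiteness guarantees termination after finitely many steps, producing $(b_2)$. The secondary difficulty is the degree bookkeeping that makes this induction terminate; this, together with the commutation argument in $(b_2)\Rightarrow(c_2)$, is where reducedness of $\{r_j\}$ does the real work.
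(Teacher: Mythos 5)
Your overall architecture ($(b_2)\Rightarrow(c_2)\Rightarrow(a_2)\Rightarrow(b_2)$, with the $k_1$ statements deduced from the $k_2$ statements) is reasonable, and your treatment of $(b_2)\Rightarrow(c_2)$ together with the product formula is essentially the paper's inductive argument: the same splitting of each factor $i_{\tau_j}(k(\zeta_j))$ via (\ref{kfactor}) and the same commutation of the upper-triangular piece rightward, with reducedness entering through Lemma \ref{rootlemma}. Your $(c_2)\Rightarrow(a_2)$ is also the paper's computation (\ref{highestweight}). Two of your choices differ from the paper without being wrong in principle: for $(c_2)\Rightarrow(b_2)$ you propose peeling off elementary factors from the right, whereas the paper gets the converse for free from the observation that $\zeta_1,\dots,\zeta_n$ are global coordinates on $N^-\cap w_n^{-1}N^+w_n$ (a consequence of the explicit form (\ref{induction}) of $l(k^{(n)})$ in the induction); and for the $k_1$ statements the paper simply runs the parallel argument with the extended reduced sequence of Theorem \ref{periodic}(b) rather than conjugating by $\dot{\mathbf w}_0$, which in your reduction leaves the finite block $i_{\tau'_{-N}},\dots,i_{\tau'_0}$ to be handled separately.

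The genuine gap is in $(a_2)\Rightarrow(c_2)$, which is the substance of the theorem and which you dispose of with ``reversing the previous computation.'' This cannot work as stated: the forward computation $(c_2)\Rightarrow(a_2)$ only records the action of $k_2^{-1}$ on highest weight vectors, i.e.\ it produces the functions $\pi(k_2)^{-1}v$ and the matrix coefficients $\dot\sigma_i(k_2^{-1})$, which is strictly less information than a triangular factorization. Moreover, existence of a triangular factorization is not automatic for elements of $\tilde L\dot K$ (e.g.\ nontrivial homomorphisms $z\mapsto z^{h}$ into $\dot T$ lie outside the big cell), so one must actually prove that $(a_2)$ forces $k_2$ into the big cell with $l_2$ of the prescribed polynomial form. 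The paper does this by producing a pointwise $\dot G$-triangular factorization of $k_2(z)^{-1}$ in a punctured neighborhood of $z=0$ (equation (\ref{k2pointwise})), then establishing three nontrivial facts: the lower-triangular factor $\dot l(k_2^{-1})$ has a removable singularity at $z=0$, proved via the explicit determinantal formulas (\ref{lower}) for the entries of an LDU factorization applied in a highest weight representation; the loop-group factorization (\ref{unobstructed}) of $\dot u(k_2^{-1})^{-1}\in L\dot N^+$ is unobstructed; and the a priori formula (\ref{loopdiagonal}) for $a_2$ lets one reconstruct $l_2$ and $u_2$ from this data as in (\ref{backwards}). None of this is present in, or implied by, your sketch, and without it the implication $(a_2)\Rightarrow(c_2)$ (hence your $(a_2)\Rightarrow(b_2)$) is unproved.
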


\begin{proof} The two sets of equivalences are proven in the same way.
We consider the second set.

The subalgebra $\mathfrak n^- \cap \mathbf w_{n-1}^{-1} \mathfrak
n^+ \mathbf w_{n-1}$ is spanned by the root spaces corresponding
to negative roots $-\tau_j$, $j=1,..,n$. Given this, the
equivalence of ($b_2$) and ($c_2$) follows from (b) of Proposition
3 of \cite{Pi1}. We recall the argument, because we will need to
refine it (This refinement is carried out in the Appendix, since
it is rather technical and not relevant for present purposes). In
the process we will also recall the proof of the product formula
for $a_2$.

The equation (\ref{kfactor}) implies that
$$i_{\tau_j}(k(\zeta_j))=i_{\tau_j}(
\left ( \begin{matrix}1&0\\\zeta_j&1
\end{matrix}\right)) a(\zeta_j)^{h_{\tau_j}}i_{\tau_j}(
\left ( \begin{matrix}1&-\bar{\zeta_j}\\0&1
\end{matrix}\right))$$
$$=exp(\zeta_j
f_{\tau_j})a(\zeta_j)^{h_{\tau_j}}\mathbf
w_{j-1}^{-1}exp(-\bar{\zeta}_j e_{\gamma_j})\mathbf w_{j-1}$$ is a
triangular factorization.

Let $k^{(n)}=i_{\tau_n}(k(\zeta_n))..i_{\tau_1}(k(\zeta_1))$.
First suppose that $n=2$. Then
\begin{equation}\label{n=2case}k^{(2)}=
exp(\zeta_2 f_{\tau_2}) a(\zeta_2)^{h_{\tau_2}}\mathbf
r_1exp(-\bar{\zeta}_2 e_{\gamma_2})\mathbf r_1^{-1} exp(\zeta_1
f_{\gamma_1})a(\zeta_1)^{h_{\gamma_1}}exp(-\bar{\zeta}_1
e_{\gamma_1})\end{equation} The key point is that
$$\mathbf r_1exp(-\bar{\zeta}_2 e_{\gamma_2})\mathbf r_1^{-1} exp(\zeta_1
f_{\gamma_1})=\mathbf r_1 exp(-\bar{\zeta}_2 e_{\gamma_2})
exp(\zeta_1 e_{\gamma_1})\mathbf r_1^{-1}$$
$$=\mathbf r_1 exp(\zeta_1
e_{\gamma_1})\tilde u \mathbf r_1^{-1},\quad (\text{for some}
\quad \tilde u\in N^+\cap r_1 N^+ r_1^{-1})$$
$$= exp(\zeta_1
f_{\gamma_1})\mathbf u, \quad (\text{for some} \quad \mathbf u\in
N^+).$$ Insert this calculation into (\ref{n=2case}). We then see
that $k^{(2)}$ has a triangular factorization, where
$$a(k^{(2)})=
a(\zeta_1)^{h_{\tau_1}}a(\zeta_2)^{h_{\tau_2}}$$ and
\begin{equation}\label{2ndcase}l(k^{(2)})=exp(\zeta_2
f_{\tau_2})exp(\zeta_1a(\zeta_2)^{-\tau_1(h_{\tau_2})}f_{\tau_1})\end{equation}
$$=exp(\zeta_2
f_{\tau_2}+\zeta_1a(\zeta_2)^{-\tau_1(h_{\tau_2})}f_{\tau_1})$$
(the last equality holds because a two dimensional nilpotent
algebra is necessarily commutative).

To apply induction, we assume that $k^{(n-1)}$ has a triangular
factorization with
\begin{equation}\label{induction}l(k^{(n-1)})=exp(\zeta_{n-1}
f_{\tau_{n-1}})\tilde l \in N^-\cap
w_{n-1}^{-1}N^+w_{n-1}=exp(\sum_{j=1}^{n-1}\mathbb C
f_{\tau_j}),\end{equation} for some $\tilde l \in N^-\cap
w_{n-2}^{-1}N^+w_{n-2}=exp(\sum_{j=1}^{n-2}\mathbb C f_{\tau_j})$,
and
$$a(k^{(n-1)})= \prod_{j=1}^{n-1}a(\zeta_j)^{h_{\tau_j}}.$$
We have established this for $n-1=1,2$. For $n \ge 2$
$$k^{(n)}=exp(\zeta_n f_{\tau_n})a(\zeta_n)^{h_{\tau_n}}\mathbf w_{n-1}^{-1}
exp(-\bar{\zeta}_n e_{\gamma_n})\mathbf w_{n-1}exp(\zeta_{n-1}
f_{\tau_{n-1}})\tilde l a(k^{(n-1)})u(k^{(n-1)})$$

$$=exp(\zeta_n f_{\tau_n})a(\zeta_n)^{h_{\tau_n}}\mathbf w_{n-1}^{-1}
exp(-\bar{\zeta}_ne_{\gamma_n}) \tilde u\mathbf
w_{n-1}a(k^{(n-1)})u(k^{(n-1)}),$$ where $\tilde u= \mathbf
w_{n-1}exp(\zeta_{n-1} f_{\tau_{n-1}})\tilde l \mathbf
w_{n-1}^{-1}\in \mathbf w_{n-1} N^{-} \mathbf w_{n-1}^{-1} \cap
N^+$. Now write $exp(-\bar{\zeta}_ne_{\gamma_n}) \tilde u=\tilde
u_1\tilde u_2$, relative to the decomposition
$$N^+=\left(N^+\cap w_{n-1}N^-w_{n-1}^{-1}\right)\left(N^+\cap
w_{n-1}N^+w_{n-1}^{-1}\right).$$ Let
$$\mathbf l=a(\zeta_n)^{h_{\tau_n}}\mathbf
w_{n-1}^{-1}\tilde u_1 \mathbf w_{n-1}a(\zeta_n)^{-h_{\tau_n}}\in
N^- \cap \mathbf w_{n-1}^{-1} N^+ \mathbf w_{n-1}.$$ Then
$k^{(n)}$ has triangular decomposition
$$k^{(n)}=\left(exp(\zeta_n f_{\tau_n})\mathbf
l\right) \left(a(\zeta_n)^{h_{\tau_n}}a(k^{(n-1)})\right)\left(
a(k^{(n-1)})^{-1}\tilde u_2a(k^{(n-1)})u(k^{(n-1)})\right).$$ This
implies the induction step. Because of the form of
(\ref{induction}), it is clear that $\zeta_1,..,\zeta_n$ are
global coordinates for $N^-\cap w_{n}^{-1}N^+w_{n}$. This
establishes the equivalence of ($b_2$) and ($c_2$). The induction
statement also implies the product formula (\ref{productformula})
for $a_2$.

It is obvious that ($c_2$) implies ($a_2$). In fact ($c_2$)
implies a stronger condition. If ($c_2$) holds, then given a
highest weight vector $v$ as in ($a_2$), corresponding to highest
weight $\dot{\Lambda}$, then
\begin{equation}\label{highestweight}
\pi(k_2^{-1})v=\pi(u_2^{-1}a_2^{-1}l^{-1})v=a_2^{-\dot{\Lambda}}\pi(u_2^{-1})v,\end{equation}
implying that $\pi(k_2^{-1})v$ is holomorphic in $\Delta$ and
nonvanishing at all points. However we do not need to include this
nonvanishing condition in ($a_2$), in this finite case.

It remains to prove that ($a_2$) implies ($c_2$). Because $\tilde
k_2$ is determined by $k_2$, as in Lemma \ref{notationlemma}, it
suffices to show that $k_2$ has a triangular factorization (with
trivial $\dot T$ component). Hence we will slightly abuse notation
and work at the level of loops in the remainder of this proof.

To motivate the argument, suppose that $k_2$ has triangular
factorization as in ($c_2$). Because $u_2(0)\in \dot{N}^+$, there
exists a pointwise $\dot{G}$-triangular factorization (see
(\ref{finitefactorization}))
\begin{equation}\label{pointwise}u_2(z)^{-1}=
\dot{l}(u_2(z)^{-1})\dot{d}(u_2(z)^{-1})\dot{u}(u_2(z)^{-1})\end{equation}
which is certainly valid in a neighborhood of $z=0$; more
precisely, (\ref{pointwise}) exists at a point $z\in \mathbb C$ if
and only if
$$\dot{\sigma}_i(u_2(z)^{-1})\ne 0, \quad i=1,..,r.$$
When (\ref{pointwise}) exists (and using the fact that $k_2$ is
defined in $\mathbb C^*$),
$$k_2(z)=\left (l_2(z)a_2 \dot{u}(u_2(z)^{-1})^{-1} a_2^{-1} \right)
\left ( a_2 \dot{d}(u_2(z)^{-1})^{-1} \right )
\dot{l}(u_2(z)^{-1})^{-1}.$$ This implies
\begin{equation}\label{k2pointwise}k_2(z)^{-1}=\dot{l}(u_2(z)^{-1})\left (
\dot{d}(u_2(z)^{-1})a_2^{-1} \right ) \left (a_2
\dot{u}(u_2(z)^{-1}) a_2^{-1} l_2(z)^{-1}\right) .\end{equation}
This is a pointwise $\dot{G}$-triangular factorization of
$k_2^{-1}$, which is certainly valid in a punctured neighborhood
of $z=0$. The important facts are that (1) the first factor in
(\ref{k2pointwise})
\begin{equation}\label{removable}\dot{l}(k_2^{-1})=\dot{l}(u_2(z)^{-1})\end{equation}
does not have a pole at $z=0$; (2) for the third (upper
triangular) factor in (\ref{k2pointwise}), the factorization
\begin{equation}\label{unobstructed}\dot{u}(k_2^{-1}) ^{-1}= l_2(z) \left
(a_2\dot{u}(u_2(z)^{-1})a_2^{-1} \right )\end{equation} is a
$L\dot{G}$-triangular factorization of $\dot{u}(k_2^{-1})^{-1}\in
L\dot{N}^+$, where we view $\dot{u}(k_2^{-1})^{-1}$ as a loop by
restricting to a small circle surrounding $z=0$; and (3) because
there is an a priori formula for $a_2$ in terms of $k_2$ (see
(\ref{loopdiagonal}), we can recover $l_2$ and (the pointwise
triangular factorization for) $u_2^{-1}$ from
(\ref{k2pointwise})-(\ref{unobstructed}): $l_2=l(\dot
u(k_2^{-1})^{-1})$ (by (\ref{unobstructed})), and
\begin{equation}\label{backwards}\dot l(u_2(z)^{-1})=\dot l( k_2(z)^{-1}),
\quad \dot d(u_2(z)^{-1})=\dot d(k_2(z)^{-1})a_2,
\end{equation}
$$ \text{and} \quad \dot u(u_2(z)^{-1})=a_2^{-1}u(\dot
u(k_2(z)^{-1}))a_2.$$ We remark that this uses the fact that $k_2$
is defined in $\mathbb C^*$ in an essential way.

Now suppose that ($a_2$) holds. In particular ($a_2$) implies that
$\dot{\sigma}_i(k_2^{-1})$ has a removable singularity at $z=0$
and is positive at $z=0$, for $i=1,..,r$. Thus $k_2^{-1}$ has a
pointwise $\dot{G}$-triangular factorization as in
(\ref{k2pointwise}), for all $z$ in some punctured neighborhood of
$z=0$.

We claim that (\ref{removable}) does not have at pole at $z=0$. To
see this, recall that for an $n\times n$ matrix $g=(g_{ij})$
having an LDU factorization, the entries of the factors can be
written explicitly as ratios of determinants:
$$\dot{d}(g)=diag(\sigma_1,\sigma_2/\sigma_1,\sigma_3/\sigma_2,..,\sigma_
n/\sigma_{n-1})$$ where $\sigma_k$ is the determinant of the
$k^{th}$ principal submatrix, $\sigma_k=det((g_{ij})_{1\le i,j\le
k})$; for $i>j$,
\begin{equation}\label{lower}l_{ij}=det\left(\begin{matrix} g_{11}&g_{12}&..&g_{1j}\\
g_{21}&&&\\
.\\
.\\
g_{j-1,1}&&&g_{j-1,j}\\
g_{i,1}&&&g_{ij}\end{matrix} \right)/\sigma_j=\frac {\langle
g\epsilon_ 1\wedge ..\wedge\epsilon_j,\epsilon_1\wedge
..\wedge\epsilon_{j-1} \wedge\epsilon_i\rangle}{\langle
g\epsilon_1\wedge ..\wedge\epsilon_ j,\epsilon_1\wedge
..\wedge\epsilon_j\rangle}\end{equation}
and for $i<j$,
$$u_{ij}=det\left(\begin{matrix} g_{11}&g_{12}&..&g_{1,i-1}&g_{1,j}\\
.&&&&g_{2,j}\\
.\\
g_{i,1}&&&&g_{i,j}\end{matrix} \right)/\sigma_i.$$ Apply this to
$g=k_2^{-1}$ in a highest weight representation. Then
(\ref{lower}), together with ($a_2$), implies the claim.

The factorization (\ref{unobstructed}) is unobstructed. Thus it
exists. We can now read the calculation backwards, as in
(\ref{backwards}), and obtain a triangular factorization for $k_2$
as in ($c_2$) (initially for the restriction to a small circle
about $0$; but because $k_2$ is of finite type, this is valid also
for the standard circle). This completes the proof.

\end{proof}

In the $C^{\infty}$ analogue of Theorem \ref{Ktheorem1}, it is
necessary to add further hypotheses in ($a_i$); see
(\ref{highestweight}). To reiterate, we are now assuming that the
sequence $\{r_j\}$ is affine periodic.

\begin{theorem}\label{Ktheorem1smooth} Suppose that $\tilde k_1 \in \tilde L \dot{K}$
and $\Pi(\tilde k_1)=k_1$. The following are equivalent:

($a_1$) $m(\tilde k_1)=1$; and for each complex irreducible
representation $V(\pi)$ for $\dot{G}$, with lowest weight vector
$\phi \in V(\pi)$, $\pi(k_1)^{-1}(\phi)$ has holomorphic extension
to $\Delta$, is nonzero at all $z\in \Delta$, and is a positive
multiple of $v$ at $z=0$.

($b_1$) $\tilde k_1$ has a factorization of the form
$$\tilde k_1=\lim_{n\to\infty}i_{\tau'_n}(k(\eta_n))..i_{\tau'_{-N}}(k(\eta_{-N})),$$
for a rapidly decreasing sequence $(\eta_j)$.

($c_1$) $\tilde k_1$ has triangular factorization of the form
$\tilde k_1=l_1a_1u_1$ where $l_1\in H^0(\Delta^*,\dot N^-)$ has
smooth boundary values.

Moreover, in the notation of ($b_1$),
$$a_1=\prod a(\eta_j)^{h_{\tau^{\prime}_j}}.$$

Similarly, the following are equivalent: for $\tilde k_2 \in
\tilde L \dot{K}$,

($a_2$) $m(\tilde k_2)=1$; and for each complex irreducible
representation $V(\pi)$ for $\dot{G}$, with highest weight vector
$v \in V(\pi)$, $\pi(k_2)^{-1}(v)\in H^0(\Delta;V)$ has
holomorphic extension to $\Delta$, is nonzero at all $z\in
\Delta$, and is a positive multiple of $v$ at $z=0$.

($b_2$) $\tilde k_2$ has a factorization of the form
$$\tilde k_2=\lim_{n\to\infty}i_{\tau_n}(k(\zeta_n))..i_{\tau_1}(k(\zeta_1))$$
for some rapidly decreasing sequence $(\zeta_j)$.

($c_2$) $\tilde k_2$ has triangular factorization of the form
$\tilde k_2=l_2a_2u_2$, where $l_2 \in H^0(\Delta^*,\infty;\dot
N^+,1)$ has smooth boundary values.

Also, in the notation of ($b_2$),
\begin{equation}\label{product2}a_2=\prod a(\zeta_j)^{h_{\tau_j}}.\end{equation}

\end{theorem}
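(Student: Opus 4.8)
The plan is to establish the chain of equivalences $(c_2) \Leftrightarrow (b_2)$ and $(a_2) \Leftrightarrow (c_2)$ by bootstrapping from the finite-type result, Theorem \ref{Ktheorem1}, and promoting it to the smooth category via the rapid-decay/smoothness dictionary already developed in the $SU(2)$ case (Theorem \ref{U(2)theorem}). The qualitative shape of the argument mirrors the proof of Theorem \ref{Ktheorem1}, but every ``polynomial'' statement there must be replaced by a ``holomorphic with smooth boundary values'' statement, and every finite product by a convergent infinite product.

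\textbf{Equivalence of $(b_2)$ and $(c_2)$.} First I would show that the partial products $k^{(n)} = i_{\tau_n}(k(\zeta_n))..i_{\tau_1}(k(\zeta_1))$ converge. The induction in the proof of Theorem \ref{Ktheorem1} gives an explicit formula: $a(k^{(n)}) = \prod_{j=1}^n a(\zeta_j)^{h_{\tau_j}}$ and $l(k^{(n)}) = \exp(\sum_{j=1}^n c_j f_{\tau_j})$ for coefficients $c_j$ built from the $\zeta_j$ (cf. (\ref{induction})). The key point is that $\{\tau_j\} = \{k d^* - \dot\alpha : \dot\alpha > 0,\ k \ge 1\}$ by Theorem \ref{periodic}(a), so that $f_{\tau_j}$ lives in the root space of a negative-frequency Fourier mode whose frequency grows (through the affine-periodic structure) linearly in $j$. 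Here affine periodicity is essential: it controls how the coordinate-change factors $a(\zeta_j)^{-\tau_1(h_{\tau_j})}$ and their higher analogues behave, and guarantees that a rapidly decreasing $(\zeta_j)$ produces a rapidly decreasing sequence of Fourier coefficients for $l_2 = \exp(\sum c_j f_{\tau_j})$, hence an $l_2 \in H^0(\Delta^*,\infty;\dot N^+,1)$ with smooth boundary values. Conversely, given such an $l_2$, one reads off the $\zeta_j$ one mode at a time and checks the resulting sequence is rapidly decreasing. The product formula (\ref{product2}) for $a_2$ follows by passing to the limit in the finite product formula.

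\textbf{The directions $(c_2)\Rightarrow (a_2)$ and $(a_2)\Rightarrow(c_2)$.} The implication $(c_2)\Rightarrow(a_2)$ is the easy direction: computation (\ref{highestweight}), $\pi(k_2^{-1})v = a_2^{-\dot\Lambda}\pi(u_2^{-1})v$, now shows directly that $\pi(k_2^{-1})v$ extends holomorphically to $\Delta$ and is nonvanishing there (since $u_2$ has a holomorphic, invertible extension), which is exactly $(a_2)$ in the smooth formulation. Note that unlike the finite case, the nonvanishing condition must be imposed explicitly, precisely because $u_2$ is no longer polynomial. For $(a_2)\Rightarrow(c_2)$ I would repeat the pointwise $\dot G$-triangular factorization argument: under $(a_2)$, the functions $\dot\sigma_i(k_2^{-1})$ are nonvanishing and positive at $0$, so the pointwise factorization (\ref{k2pointwise}) exists on a punctured neighborhood of $0$, and the determinant formula (\ref{lower}) together with $(a_2)$ shows the lower factor $\dot l(k_2^{-1}) = \dot l(u_2(z)^{-1})$ has a removable singularity at $z=0$. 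The factorization (\ref{unobstructed}) is then unobstructed, and reading the calculation backwards as in (\ref{backwards}) recovers $l_2$ and $u_2$.

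\textbf{Main obstacle.} The hardest part is the regularity bookkeeping: in the finite case ``polynomial'' is a closed condition preserved under all the manipulations, whereas here I must track smoothness of boundary values through the pointwise factorization and, more delicately, through the infinite-product limit in $(b_2)$. Concretely, the challenge is to prove that rapid decay of $(\zeta_j)$ is \emph{equivalent} to $l_2$ having smooth boundary values, which requires quantitative control of how the nonlinear coordinate change relating $\{\zeta_j\}$ to the Fourier coefficients of $l_2$ interacts with Fourier decay. This is exactly the point where affine periodicity is used and, as the authors remark, is plausibly essential; the analogous $SU(2)$ statement is Theorem \ref{U(2)theorem}(a), and the general argument should be a representation-theoretic and combinatorial amplification of that estimate, using Lemma \ref{rootlemma} to organize the roots $\tau_j$ by their affine grading.
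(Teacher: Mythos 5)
Your overall architecture is reasonable, and several pieces line up with the paper: the use of \eqref{highestweight} for ($c_2$)$\Rightarrow$($a_2$), the observation that the nonvanishing hypothesis must now be imposed explicitly, and the role of affine periodicity (via the rewriting \eqref{affineeqn}) in translating rapid decay of $(\zeta_j)$ into smoothness. But there is a genuine gap in your ($a_2$)$\Rightarrow$($c_2$) step, and it is the central point of the theorem. You propose to ``repeat the pointwise $\dot G$-triangular factorization argument'' from Theorem \ref{Ktheorem1}. That argument cannot be repeated here. In the finite-type case, $k_2$ is a Laurent-polynomial loop, hence defined on all of $\mathbb C^*$; the factorization \eqref{k2pointwise} is performed on a small circle around $z=0$, and the conclusion is transported to the standard circle only because the loop is of finite type. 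The paper flags this explicitly (``this uses the fact that $k_2$ is defined in $\mathbb C^*$ in an essential way''). A general smooth loop is defined only on $S^1$: there is no punctured neighborhood of $0$ on which to run \eqref{k2pointwise}, and the entries of $\dot u(k_2^{-1})$ in \eqref{backwards} are matrix coefficients against vectors that are \emph{not} highest weight vectors, so hypothesis ($a_2$) gives them no holomorphic extension whatsoever. Moreover, even on $S^1$ itself, ($a_2$) does not guarantee that the principal minors $\dot\sigma_i(k_2^{-1}(z))$ are nonvanishing for all $z\in S^1$ (nonvanishing of the vector $\pi(k_2^{-1})v$ does not imply nonvanishing of its top component), so the pointwise factorization need not exist everywhere and \eqref{unobstructed} is not available.

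The paper replaces this step with an operator-theoretic argument that your proposal is missing entirely: reduce to $\dot K=SU(n)$ via a faithful highest weight representation, view $k_2$ as a multiplication operator on $L^2(S^1;\mathbb C^n)$ with Hardy polarization, and prove that the Toeplitz block $A(k_2)$ is invertible (which yields the Birkhoff, hence triangular, factorization). Injectivity of $A(k_2)$ is where ($a_2$) enters: for $f\in\ker A(k_2)$ one has $(C_j^*f)_+=0$ for the columns $C_j$ of $k_2^{-1}$, and the holomorphic extension and pointwise nonvanishing of $C_1\wedge\cdots\wedge C_j$ (i.e.\ of $\pi(k_2^{-1})$ applied to the highest weight vectors of $\Lambda^j(\mathbb C^n)$) are used, by successively wedging $f$ against $C_1\wedge\cdots\wedge C_{n-2}$, $C_1\wedge\cdots\wedge C_{n-3}$, etc., to force $f=0$; surjectivity then follows because simple connectivity of $\dot K$ forces $A(k_2)$ to have index zero. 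This is the idea your proof would need and does not contain. A secondary, lesser divergence: for ($c_2$)$\Rightarrow$($b_2$) the paper does not ``read off the $\zeta_j$ mode by mode'' but truncates $\log(l_2)=\sum x_j^*f_{\tau_j}$ as in \eqref{x*defn}, applies Theorem \ref{Ktheorem1} to each truncation to get finite parameter strings $\zeta^{(N)}$, and uses convergence of $a_2^{\Lambda_0}=\prod(1+\vert\zeta_j\vert^2)^{-\Lambda_0(h_{\tau_j})}$ together with $\Lambda_0(h_{\tau_j})\sim j$ to obtain a uniform weighted bound and extract a limit; your sketch of this direction is plausible in spirit but would still need an argument of this quantitative kind.
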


\begin{proof} The two sets of equivalences are proven in the same way.
We consider the second set.

Suppose that ($a_2$) holds. To show that ($c_2$) holds, it
suffices to prove that $k_2$ has a triangular factorization with
$l_2$ of the prescribed form. By working in a fixed faithful
highest weight representation for $\dot{\mathfrak g}$, without
loss of generality, we can suppose $\dot K=SU(n)$. For the
purposes of this proof, we will use the terminology in Section 1
of \cite{Pi2}. We view $k_2\in LSU(n)$ as a unitary multiplication
operator on the Hilbert space $\mathcal H=L^2(S^1;\mathbb C^n)$,
and we write
$$M_{k_2}=\left(\begin{matrix}A(k_2)&B(k_2)\\C(k_2)&D(k_2)\end{matrix}\right)$$
relative to the Hardy polarization $\mathcal H=\mathcal H^+\oplus
\mathcal H^-$, where $A(k_2)$ is the compression of $M_{k_2}$ to
$\mathcal H^+$, the subspace of functions in $\mathcal H$ with
holomorphic extension to $\Delta$. To show that $k_2$ has a
Birkhoff factorization, we must show that $A(k_2)$ is invertible
(see Theorem 1.1 of \cite{Pi2}). An elementary argument then shows
that $k_2$ has a triangular factorization of the desired form.

Let $C_1,..,C_n$ denote the columns of $k_2^{-1}$. In particular
($a_2$) implies that $C_1$ has holomorphic extension to $\Delta$
and $C_n$ has holomorphic extension to $\Delta^*$ (by considering
the dual representation). Now suppose that $f\in \mathcal H^+$ is
in the kernel of $A(k_2)$. Then
\begin{equation}\label{keyfact}(C_j^*f)_+=0, \quad j=1,..,n,\end{equation}
where $(\cdot)_+$ denotes orthogonal projection to $\mathcal H^+$,
and $C_j^*$ is the (pointwise) Hermitian transpose. Since $C_n^*$
has holomorphic extension to $\Delta$, $(C_n^*f)_+=C_n^*f$ is
identically zero on $S^1$. This implies that for $z\in S^1$,
$f(z)$ is a linear combination of the $n-1$ columns $C_j(z)$,
$j<n$. We write
$$f=\lambda_1C_1+..+\lambda_{n-1} C_{n-1}$$
where the coefficients are functions on the circle (defined a.e.).
Now consider the pointwise wedge product of $\mathbb C^n$ vectors
$$f\wedge C_1\wedge..\wedge C_{n-2}=\pm
\lambda_{n-1}C_1\wedge..\wedge C_{n-1}.$$ The vectors
$C_1\wedge..\wedge C_{j}$ extend holomorphically to $\Delta$, and
never vanish, for any $j$, by ($a_2$) (by considering the
representation $\Lambda^j(\mathbb C^n)$). Since $f$ also extends
holomorphically, this implies that $\lambda_{n-1}$ has holomorphic
extension to $\Delta$. Now
$$C_{n-1}^*f=\lambda_{n-1}C_{n-1}^*C_{n-1}=\lambda_{n-1}$$ by
pointwise orthonormality of the columns. Since the right hand side
is holomorphic in $\Delta$, by (\ref{keyfact}) (for $j=n-1$)
$\lambda_{n-1}$ vanishes identically. This implies that in fact
$f$ is a (pointwise) linear combination of the first $n-2$ columns
of $k_2^{-1}$. Continuing the argument in the obvious way (by next
wedging $f$ with $C_1\wedge..\wedge C_{n-3}$ to conclude that
$\lambda_{n-2}$ must vanish), we conclude that $f$ is zero. This
implies that $ker(A(k_2))=0$. Since $\dot K$ is simply connected,
$A(k_2)$ has index zero. Hence $A(k_2)$ is invertible. This
implies ($c_2$).

It is obvious that ($c_2$) $\implies$ ($a_2$); see
(\ref{highestweight}). Thus ($a_2$) and ($c_2$) are equivalent.

Before showing that ($b_2$) is equivalent to ($a_2$) and ($c_2$),
we need to explain why the $C^{\infty}$ limit in ($b_2$) exists.
Because $k(\zeta_j)=1+O(\vert\zeta_j\vert)$ as $\zeta_j \to 0$,
the condition for the product in ($b_2$) to converge absolutely is
that $\sum \zeta_n$ converges absolutely. So $k_2$ certainly
represents a continuous loop. Let
$$\tilde k_2^{(l)}(\zeta_1,..,\zeta_l)=i_{\tau_l}(k(\zeta_l))..i_{\tau_1}(k(\zeta_1)),$$
where $l$ is the length of the period for our affine periodic
sequence of simple reflections. Because $\tau_{l+1}=w_l^{-1}\cdot
\tau_1$, $\tau_{l+2}=w_l^{-1}\tau_2$, and in general
$\tau_{nl+j}=w_l^{-n}\cdot \tau_j$, the limit in ($b_2$) can be
written as
\begin{equation}\label{affineeqn}\tilde
k_2=\lim_{n\to\infty}\left(\mathbf w_l^{-n}\tilde
k_2^{(l)}(Z_{n})\mathbf w_l^{n} ..\tilde
k_2^{(l)}(Z_0)\right),\end{equation} where
$Z_n=(\zeta_{nl+1},..,\zeta_{nl+l})$. Now suppose that we fix a
matrix representation for $\dot K$. The unitary loop $k_2^{(l)}$,
for any argument, has a fixed order, i.e. number of Fourier
coefficients. Also $\mathbf w_l\in Hom(S^1,\dot T)$ is fixed. This
implies the order of $\mathbf w_l^{-n}k_2^{(l)}(Z_{n})\mathbf
w_l^{n}$ is asymtotically $n$. Thus when one differentiates this
term, coefficients on the order of $n$ will appear. This implies
that if $\zeta$ behaves like $n^{-p}$, then $k_2$ will have
roughly $p$ derivatives. In particular if $\zeta\in c^{\infty}$,
the Frechet space of rapidly decreasing sequences, then $k_2\in
C^{\infty}$.

Now suppose that ($b_2$) holds. The map from $\zeta$ to $\tilde
k_2$ is continuous, with respect to the standard Frechet
topologies for rapidly decreasing sequences and smooth functions.
The product (\ref{product2}) is also a continuous function of
$\zeta$, and hence is nonzero. This implies that $\tilde k_2$ has
a triangular factorization which is the limit of triangular
factorizations of the terms on the right hand side of
(\ref{affineeqn}). By Theorem \ref{Ktheorem1} and continuity, this
factorization will have the special form in ($c_2$). Thus ($b_2$)
$\implies$ ($a_2$) and ($c_2$).

Suppose that we are given $k_2$ as in ($a_2$) and ($c_2$). Recall
that $l_2$ has values in $\dot N^+$. We can take the logarithm,
and this will have an expansion
\begin{equation}\label{x*defn}log(l_2)=\sum_{j=1}^{\infty}x_j^*f_{\tau_j},\quad
x_j^*\in\mathbb C\end{equation} (the use of $x^*$ for the
coefficients is consistent with our notation in the $SU(2)$ case,
see ($c_2$) of Theorem \ref{SU(2)theorem1}). In the notation of
the proof of Theorem \ref{Ktheorem1}, the corresponding truncated
sum
$$log(l_2^{(N)})=\sum_{j=1}^{N}x_j^*f_{\tau_j}\in
\mathfrak n^- \cap w_{N-1}^{-1}\mathfrak n^+ w_{N-1}.$$ By Theorem
\ref{Ktheorem1} there is a corresponding $k_2^{(N)} \in
L_{fin}\dot K$ and a uniquely determined sequence
$(\zeta_1^{(N)},..,\zeta_N^{(N)})$ such that $k_2^{(N)}$ is
represented as in ($b_2$) and $k_2$ is a $C^{\infty}$ limit of
loops $k_2^{(N)}$. Since the $k_2^{(N)}$ converge to $k_2$, the
corresponding products (\ref{product2}) will converge. In
particular $a_2^{\Lambda_0}=\prod
(1+\vert\zeta_j\vert^2)^{-\Lambda_0(h_{\tau_j})}$ will converge.
Since $\Lambda_0(h_{\tau_j})$ is asymptotically $j$, this implies
that the sequence $\zeta^{(N)}$ is uniformly bounded in $w^{1/2}$.
Consequently this sequence of sequences will have a convergent
subsequence $\zeta\in l^2$. Thus we can assume that
$$\tilde k_2^{(N)}=i_{\tau_N}(k(\zeta_N))..i_{\tau_1}(k(\zeta_1)).$$
We can now reverse the reasoning of the previous paragraph.
Because the product converges in $C^{\infty}$, the product of the
$i_{\tau_N}(k(\zeta_N))$ converges absolutely, implying $\sum
\zeta_n$ is absolutely summable.

Finally (\ref{product2}) follows by continuity from
(\ref{productformula}).

\end{proof}

\section{Generalization of Theorem
\ref{U(2)theorem}}\label{theorem2}

\begin{theorem}\label{smooththeorem}Suppose $\tilde g \in \tilde L\dot K$ and $\Pi(\tilde g)=g$.

(a) The following are equivalent:

(i) $\tilde g$ has a triangular factorization $\tilde g=lmau$,
where $l$ and $u$ have $C^{\infty}$ boundary values.

(ii) $\tilde g$ has a factorization of the form
$$\tilde g=\tilde k_1^* exp(\chi)\tilde k_2,$$ where $\chi \in \tilde L\dot{\mathfrak t}$,
and $\tilde k_1$ and $\tilde k_2$ are as in Theorem
\ref{Ktheorem1smooth}.

(b) In reference to part (a),
\begin{equation}\label{diagonalclaim}a(\tilde g)=a(g)=
a(k_1)a(exp(\chi))a(k_2),\quad
\Pi(a(g))=\Pi(a(k_1))\Pi(a(k_2))\end{equation} and
\begin{equation}\label{abeliandiagonal}a(exp(\chi))=\vert \sigma_0
\vert(exp(\chi))^{h_0}\prod_{j=1}^r \vert \sigma_0
\vert(exp(\chi))^{\check a_j h_j}.  \end{equation}
\end{theorem}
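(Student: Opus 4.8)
The plan is to prove the two equivalences in (a) and then read off (b) from the resulting factorization. Since $\tilde k_1,\tilde k_2,\exp(\chi)\in\tilde L\dot K$ are unitary, $*$ is inversion, so (ii) reads $\tilde g=\tilde k_1^{-1}\exp(\chi)\tilde k_2$; in particular $\exp(\chi)=\tilde k_1\tilde g\tilde k_2^{-1}$ is automatically unitary, and by Lemma \ref{notationlemma} I may pass freely between $\tilde g$ and $g$ while tracking the central/$m$ data separately. For (ii)$\Rightarrow$(i) I would substitute the triangular factorizations $\tilde k_i=l_ia_iu_i$ from Theorem \ref{Ktheorem1smooth} and the abelian factorization $\exp(\chi)=\nu_-\delta\nu_+$ (with $\nu_\pm\in N^\pm$ torus-valued and $\delta=d(\exp(\chi))\in TA$), and, using $a^*=a$ on $A$ and $(N^\pm)^*=N^\mp$, write $\tilde k_1^*=u_1^*a_1l_1^*$. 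Multiplying out, $u_1^*\in N^-$ sits on the far left and $u_2\in N^+$ on the far right; triangular-factorizing the smooth middle factor $a_1l_1^*\nu_-\delta\nu_+l_2a_2$ and absorbing $u_1^*,u_2$ into the outer $N^\mp$ factors yields (i) with smooth $l,u$. The middle factorization exists because the relevant fundamental coefficients stay nonzero, the individual non-vanishing conditions of Theorem \ref{Ktheorem1smooth} propagating through the product.

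For (i)$\Rightarrow$(ii), the substantive direction, I would construct $\tilde k_1,\tilde k_2$ from the weight data of $g$. Given a smooth triangular factorization $\tilde g=lmau$, the coefficients $\sigma_j(\tilde g)$ are nonzero, so in each irreducible $V(\pi)$ the vectors $\pi(g)^{-1}v$ (for a highest weight vector $v$) and $\pi(g)^{-1}\phi$ (for a lowest weight vector $\phi$) inherit the holomorphicity and non-vanishing in $\Delta$ forced by smoothness of $l,u$. I then define $\tilde k_2$ to be the unique unitary loop of the form ($c_2$) of Theorem \ref{Ktheorem1smooth} realizing this highest weight data, and $\tilde k_1$ the one realizing the lowest weight data; existence and uniqueness are exactly the equivalences ($a_2$)$\Leftrightarrow$($c_2$) and ($a_1$)$\Leftrightarrow$($c_1$). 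Setting $\exp(\chi):=\tilde k_1\tilde g\tilde k_2^{-1}$, this is unitary and, by the matching, has trivial $N^\pm$ parts in its triangular factorization; a unitary loop whose triangular factorization is purely diagonal takes values in $\dot T$, so $\chi\in\tilde L\dot{\mathfrak t}$, and is smooth since $g,k_1,k_2$ are.

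Part (b) I would obtain from the matrix coefficients directly. Writing $\sigma_j(\tilde g)=\langle\pi(\exp(\chi))\pi(\tilde k_2)v,\pi(\tilde k_1)v\rangle$ (unitarity of $\pi$) with $v=v_{\Lambda_j}$, and using $\pi(\tilde k_i)v=a_i^{\Lambda_j}(v+w_i)$, the key point is that $\exp(\chi)$ preserves the finite ($\dot{\mathfrak h}$-)weight grading, while $w_2=\pi(l_2)v-v$ lies strictly above and $w_1=\pi(l_1)v-v$ strictly below the weight $\dot\Lambda_j$ of $v$ — precisely because $l_2$ is $\dot N^+$-valued and $l_1$ is $\dot N^-$-valued. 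Hence all cross terms vanish by weight orthogonality and $\sigma_j(\tilde g)=a(k_1)^{\Lambda_j}\sigma_j(\exp(\chi))a(k_2)^{\Lambda_j}$; since $a=\prod_j|\sigma_j|^{h_j}$ (see \ref{diagonal}, \ref{loopdiagonal}) this gives $a(\tilde g)=a(k_1)a(\exp(\chi))a(k_2)$, with $a(k_i)$ the product formulas of Theorem \ref{Ktheorem1smooth} and $a(\tilde g)=a(g)$ by \ref{diagonalnotation}. Equation (\ref{abeliandiagonal}) is then a direct abelian computation: for $\chi\in\tilde L\dot{\mathfrak t}$ the finite part of $\chi$ contributes only unit-modulus phases, so $|\sigma_j|(\exp(\chi))=|\sigma_0|(\exp(\chi))^{\check a_j}$ for $j\ge1$; substituting into $a(\exp(\chi))=\prod_j|\sigma_j|^{h_j}$ and using $h_0+\sum_{j\ge1}\check a_jh_j=c$ shows $a(\exp(\chi))=|\sigma_0|(\exp(\chi))^c$ is purely central, whence $\Pi(a(\exp(\chi)))=1$ and the second identity $\Pi(a(g))=\Pi(a(k_1))\Pi(a(k_2))$ follows.

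The main obstacle is the (i)$\Rightarrow$(ii) direction, specifically verifying that $\tilde k_1\tilde g\tilde k_2^{-1}$ is genuinely torus-valued: this rests on the $\tilde k_i$ built from the weight data of $g$ absorbing all of its off-diagonal content, which in turn requires the non-vanishing clauses of ($a_i$) in Theorem \ref{Ktheorem1smooth}, and these hold exactly because $g$ has a bona fide smooth triangular factorization (so the $\sigma_j(g)$ are nonzero and each $\pi(g)^{-1}v$ extends holomorphically without zeros). The remaining care is bookkeeping: the smoothness and convergence of the assembled $l,u$ in (ii)$\Rightarrow$(i), and tracing the central contribution of the cocycle (\ref{bracket}) that produces the nontrivial $c$-direction in (\ref{abeliandiagonal}).
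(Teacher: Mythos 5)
Your (i)$\Rightarrow$(ii) direction has a genuine gap. You claim that for $g=lmau$ the vectors $\pi(g)^{-1}v$ (for $v$ a highest weight vector of a $\dot G$-representation) ``inherit the holomorphicity and non-vanishing in $\Delta$,'' and you then want to invoke ($a_2$)$\Leftrightarrow$($c_2$) of Theorem \ref{Ktheorem1smooth} to produce $\tilde k_2$. Neither step works. First, $l$ extends holomorphically to $\Delta^*$, not $\Delta$, so $\pi(l(z))^{-1}v$ already contributes negative powers of $z$ and $\pi(g)^{-1}v$ is in general \emph{not} holomorphic in $\Delta$; the hypotheses of ($a_2$) simply are not satisfied by the weight data of $g$ itself. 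Second, even if some corrected data were of the right form, the equivalences in Theorem \ref{Ktheorem1smooth} characterize which \emph{given} unitary loops satisfy ($a_i$)/($c_i$); they are not an existence-and-uniqueness statement producing a unitary loop from prescribed holomorphic vector-valued functions, so ``define $\tilde k_2$ to be the unique unitary loop realizing this data'' is not licensed by anything you have. The paper's construction is quite different: it applies a \emph{pointwise} Iwasawa decomposition $l^{-1}=\dot n_1^{-1}\dot a_1^{-1}\dot k_1$, $u=\dot n_2\dot a_2\dot k_2$ in the finite-dimensional group, corrects $\dot k_1,\dot k_2$ by torus-valued loops obtained from the Birkhoff factorization of the abelian factors $\dot a_i$, verifies directly that the resulting $k_1,k_2$ have triangular factorizations of the shapes ($c_1$),($c_2$), and then observes that $k_1gk_2^{-1}$ is simultaneously $\dot K$-valued and $\dot B^+$-valued, hence $(L\dot T)_0$-valued. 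Your closing remark that torus-valuedness is ``the main obstacle'' is accurate, but your proposal contains no mechanism that actually delivers it.

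Two smaller points. In (ii)$\Rightarrow$(i) your structural setup matches the paper's ($g=u_1^*\Pi(a_1)(l_1^*\exp(\chi)l_2)\Pi(a_2)u_2$), but the correct reason the middle block factors is that $b=l_1^*\exp(\chi)l_2$ takes values in the finite-dimensional Borel $\dot B^+$ (and lies in the identity component), which the paper factors explicitly using $\chi=\chi_-+\chi_0+\chi_+$; ``non-vanishing of fundamental coefficients propagates through the product'' is not an argument. For (b), the paper simply reads the identities off the explicit formulas (\ref{gfactorization}) together with (\ref{loopdiagonal}); your matrix-coefficient computation could probably be made to work, but the weight bookkeeping must be done in the affine representations $V(\Lambda_j)$ (where $\exp(\chi)$ with $\chi$ a nonconstant loop does \emph{not} preserve the energy grading), so as written it is not a proof either.
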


\begin{proof}  It suffices to prove (a) with $g$ in place of
$\tilde g$, $k_1$ in place of $\tilde k_1$, and so on. This will
require that we use the clumsy (but correct) notation $k_i=l_i
\Pi(a_i)u_i$ for the triangular decomposition of $k_i$. We can
also assume that $\chi \in L\dot{\mathfrak t}$.

Suppose that we are given $g$ as in (ii). Both $k_1$ and $k_2$
have triangular factorizations. In the notation of Theorem
\ref{Ktheorem1smooth}
$$g=(l_1\Pi(a_1)u_1)^*exp(\chi)(l_2\Pi(a_2)u_2)=u_1^* \Pi(a_1)
(l_1^*exp(\chi)l_2)\Pi(a_2)u_2.$$ The simple observation is that
$b=l_1^*exp(\chi)l_2\in C^{\infty}(S^1,\dot B^+)_0$ (the identity
component), and hence will have a triangular factorization. More
precisely, if we write $\chi=\chi_-+\chi_0+\chi_+$, where
$\chi_-\in H^0(\Delta^*,\infty;\dot{\mathfrak h},0)$, $\chi_0\in
\dot{\mathfrak t}$, and $\chi_+\in H^0(\Delta,0;\dot{\mathfrak
h},0)$, then
$$b=exp(\chi_-)\left(exp(-\chi_-)l_1^*exp(\chi_-)exp(\chi_0)exp(\chi_+)l_2exp(-\chi_+)\right)exp(\chi_+)$$
will have triangular factorization
$$=\left(exp(\chi_-)L\right)\left(m(b)a(b)\right)\left(Uexp(\chi_+)\right),$$
where $m(b)=exp(\chi_0)$, $a(b)=1$,
$$L=l(exp(-\chi_-)l_1^*exp(\chi_-)exp(\chi_0)exp(\chi_+)l_2exp(-\chi_+))\in H^0(\Delta^*,\infty;\dot
N^+,1),$$ and
$$U=u(exp(-\chi_-)l_1^*exp(\chi_-)exp(\chi_0)exp(\chi_+)l_2exp(-\chi_+))\in H^0(\Delta;\dot
N^+).$$ Thus $g$ will have a triangular factorization with
\begin{equation}\label{gfactorization}l(g)=u_1^*exp(\chi_-)\Pi(a_1)L\Pi(a_1)^{-1},\quad
\Pi(m(g))=exp(\chi_0),\end{equation}
$$\Pi(a(g))=\Pi(a_1)\Pi(a_2),\quad
u(g)=\Pi(a_2)^{-1}U\Pi(a_2)exp(\chi_+)u_2.$$ Thus (ii) $\implies$
(i).

Conversely, suppose $g=l\Pi(ma)u$, as in (i). At each point of the
circle there are $\dot N^+ \dot A \dot K$ decompositions
$$l^{-1}=\dot n_1^{-1} \dot a_1^{-1} \dot k_1,\quad u=\dot n_2
\dot a_2 \dot k_2.$$ In turn there are Birkhoff decompositions
$$\dot a_i=exp(\chi_i^*+\chi_{i,0}+\chi_i),
\quad \chi_i\in H^0(\Delta,\dot h),\quad \chi_{i,0}\in \dot
h_{\mathbb R}$$ Define
$$k_1=exp(-\chi_1^*+\chi_1)\dot k_1,\quad \text{and} \quad  k_2=exp(-\chi_2^*+\chi_2)\dot k_2$$
Then
$$k_1=exp(\chi_{1,0}+2\chi_1)\dot n_1^{-*}l^*$$ has
triangular factorization with
$$l(k_1)=l(
exp(\chi_{1,0}+2\chi_1)\dot n_1^{-*}exp(-(\chi_{1,0}+2\chi_1)))\in
H^0(\Delta^*,\infty;\dot N^-,1), \Pi(a(k_1))=exp(\chi_{1,0}),$$
and similarly
$$k_2=exp(-2\chi_2^*-\chi_{2,0})\dot n_2^{-1}u$$ has
triangular factorization with
$$l(k_2)=l(
exp(-2\chi_2^*-\chi_{2,0})\dot
n_2^{-1}exp(2\chi_2^*+\chi_{2,0}))\in H^0(\Delta^*,\infty;\dot
N^+,1), \Pi(a(k_2))=exp(\chi_{2,0})$$

Finally, and somewhat miraculously, on the one hand $k_1 g
k_2^{-1} $ has value in $\dot K$, and on the other hand
\begin{equation}\label{uppert}k_1 g k_2^{-1}=exp(\chi_{1,0}+2\chi_1^*)\dot n_1 ma\dot
n_2 exp(\chi_{2,0}+2\chi_2),\end{equation}  has values in $\dot
B^+$. Therefore $k_1 g k_2^{-1}$ has values in $\dot T$. It is
also clear that (\ref{uppert}) is connected to the identity, and
hence $k_1 g k_2^{-1}\in (L\dot T)_0$. Thus (i) $\implies$ (ii).

Part (b) follows from (\ref{gfactorization}) and
(\ref{loopdiagonal}).

\end{proof}

\section{Examples}\label{examples}

In the case of $\dot K=SU(n)$, the fundamental matrix coefficient
can be realized as the determinant of the Toeplitz operator $A(g)$
associated to a loop $g\in LSU(n)$, where the determinant is
viewed as a section of a line bundle; see p. 226 of \cite{PS}.
Part (b) of Theorem \ref{smooththeorem} implies that
$$\vert \sigma_0 \vert^2(g)=det(A(g)^*A(g))$$
$$=\prod_{j=1}^{\infty}(1+\vert \eta_j
\vert^2)^{-\Lambda_0(h_{\tau_j})}exp(-n\sum k\vert
\chi_k\vert^2)\prod_{i=1}^{\infty}(1+\vert
\zeta_i\vert^2)^{-\Lambda_0(h_{\tau_i^{\prime}})}.$$ If
$\tau_j=k(j)d^*-\dot{\alpha}$ (as in Theorem \ref{periodic}), then
$\Lambda_0(h_{\tau_j})=k(j)$.

\subsection{SU(2)}

In this case there is a unique reduced sequence of simple
reflections, as in Theorem \ref{periodic}, corresponding to the
periodic sequence of simple roots
$\alpha_0,\alpha_1,\alpha_0,\alpha_1,..$ with period length $l=2$
and period
$$w_2=r_{\alpha_1}r_{\alpha_0}=\dot h_{\dot{\theta}}=2h_{\dot{\delta}}.$$
The corresponding sequence of positive roots $\tau_j$ mapped to
negative roots is
$$d^*-\dot{\theta},2d^*-\dot{\theta},3d^*-\dot{\theta},..$$
In this case
$$\lim_{n \to \infty}N^-\cap
w_n^{-1}N^+w_n=\{\left(\begin{matrix}1&x^*\\0&1\end{matrix}\right):x=\sum_{j>0}x_j
z^j\}$$ is abelian. For $g\in LSU(2)$ as in Theorem
\ref{smooththeorem},
$$\vert \sigma_0 \vert^2(g)=det(A(g)^*A(g))=\prod(1+\vert
\eta_j \vert^2)^{-j}exp(-2\sum k\vert \chi_k\vert^2)\prod(1+\vert
\zeta_i\vert^2)^{-i}$$ (see (c) of Theorem 7 of \cite{Pi1}).

\subsection{SU(3)}

In this case $h_{\dot{\delta}}$ is in the coroot lattice. For this
choice of period, there are two possible affine periodic reduced
sequences, as in Theorem \ref{periodic} (this is obvious from the
picture on page 72 of \cite{PS}). One choice corresponds to the
periodic sequence of simple roots
$\alpha_0,\alpha_1,\alpha_2,\alpha_1,..$, with period length $l=4$
(the other interchanges $1$ and $2$). The sequence of positive
roots mapped to negative roots is
$$d^*-\dot{\theta},d^*-\dot{\alpha_2},2d^*-\dot{\theta},
d^*-\dot{\alpha_1},3d^*-\dot{\theta},2d^*-\dot{\alpha_2},
4d^*-\dot{\theta},2d^*-\dot{\alpha_1},..$$  For $g\in LSU(3)$ as
in Theorem \ref{smooththeorem},
$$det\vert A(g) \vert^2=  \prod(1+\vert
\eta_j \vert^2)^{-k(j)}exp(-3\sum k\vert
\chi_k\vert^2)\prod(1+\vert \zeta_i\vert^2)^{-k(i)}$$ where $k(i)$
is the sequence $1,1,2,1,3,2,4,2,5,3,6,3,..$.

\subsection{SU(n)}

This will appear in the second author's dissertation.

\section{Appendix. The Relation between $x^*$ and $\zeta$}

In this Appendix we consider the correspondence
$$ \zeta\in c^{\infty} \leftrightarrow x^*\in c^{\infty},$$
where $l_2=exp(\sum x_j^{*}f_{\tau_j})$, as in the proof of
Theorem \ref{smooththeorem} (see (\ref{x*defn})). In the $SU(2)$
case it is known that
$$x_j^*=x_1^*(\zeta_j,\zeta_{j+1},..)
=\zeta_j\prod_{k=j+1}^{\infty}(1+\vert\zeta_k\vert^2)+R(\zeta_{j+1},..).$$
This triangular relation between $\zeta$ and $x^*$ in principle
(and in practice on a computer) leads to a combinatorial way of
computing an expansion of $\zeta_j$ in terms of $x^*_k$, $j\le k$;
see the Appendix of \cite{Pi2}.

\begin{proposition}In general
$$x_j^*=\zeta_j\left(\prod_{k=j+1}^{\infty}a(\zeta_k)^{-\tau_k(h_{\tau_j})}\right)
+R_j(\zeta_{j+1},\zeta_{j+2},..)$$
and
$$x_{nl+s}=x_s(\zeta_{nl+s},\zeta_{nl+s+1},..)$$
\end{proposition}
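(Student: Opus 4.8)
The plan is to extract the relation between $x^*$ and $\zeta$ directly from the inductive computation of $l(k^{(n)})$ carried out in the proof of Theorem \ref{Ktheorem1}. Recall from equation (\ref{induction}) that at each stage the lower triangular factor satisfies
$$l(k^{(n)})=\exp(\zeta_n f_{\tau_n})\,\tilde l,\qquad \tilde l\in \exp\Bigl(\sum_{j=1}^{n-1}\mathbb C f_{\tau_j}\Bigr).$$
Since $l_2=\lim_n l(k^{(n)})$ and $\log(l_2)=\sum_j x_j^* f_{\tau_j}$, the coefficient $x_j^*$ is determined by the passage from the product form $\prod i_{\tau_k}(k(\zeta_k))$ to the single exponential. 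The key observation is that the $n=2$ computation already exhibits the mechanism explicitly: from (\ref{2ndcase}),
$$l(k^{(2)})=\exp(\zeta_2 f_{\tau_2})\exp\bigl(\zeta_1 a(\zeta_2)^{-\tau_1(h_{\tau_2})} f_{\tau_1}\bigr),$$
so that the contribution of $\zeta_1$ to $x_1^*$ is rescaled by the factor $a(\zeta_2)^{-\tau_1(h_{\tau_2})}$ coming from conjugation by the diagonal piece $a(\zeta_2)^{h_{\tau_2}}$. First I would isolate this rescaling as the leading-order effect and collect it across all higher indices.

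Concretely, I would prove the formula by induction on $j$ running downward, tracking how $\zeta_j f_{\tau_j}$ propagates through the factors $i_{\tau_k}(k(\zeta_k))$ for $k>j$. Each such factor, when commuted past $\exp(\zeta_j f_{\tau_j})$, produces two effects: a diagonal rescaling by $a(\zeta_k)^{-\tau_j(h_{\tau_k})}$ (exactly as in the $n=2$ case, since $\tau_j(h_{\tau_k})$ is the relevant root pairing), and correction terms arising from the nonabelian commutators in $\mathfrak n^-$. The product of the rescalings over all $k>j$ gives precisely the stated leading factor
$$\zeta_j\prod_{k=j+1}^{\infty}a(\zeta_k)^{-\tau_k(h_{\tau_j})},$$
while the commutator corrections are homogeneous of degree $\ge 2$ in $(\zeta_{j+1},\zeta_{j+2},\dots)$ and are collected into the remainder $R_j$, which depends only on indices strictly greater than $j$ by the triangular structure of (\ref{induction}). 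I would verify that $R_j$ indeed omits $\zeta_j$ itself: this is forced because the $f_{\tau_j}$-coefficient receives a term linear in $\zeta_j$ only from the single outermost exponential, all other occurrences of $\zeta_j$ being absorbed into higher coefficients $x_k^*$ with $k<j$. The periodicity statement $x_{nl+s}=x_s(\zeta_{nl+s},\zeta_{nl+s+1},\dots)$ then follows from affine periodicity of the reduced sequence: since $\tau_{nl+s}=w_l^{-n}\cdot\tau_s$ and the root pairings $\tau_k(h_{\tau_j})$ are invariant under the simultaneous shift $k\mapsto k+l$, $j\mapsto j+l$, the functional form of the coefficient is unchanged under translation of the index by $l$.

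The main obstacle will be controlling the remainder $R_j$ in the infinite setting: one must show that the formal downward induction converges, i.e. that for a rapidly decreasing sequence $\zeta\in c^\infty$ the infinite product of rescalings converges and the accumulated commutator corrections remain rapidly decreasing, so that $x^*\in c^\infty$ as claimed in the correspondence. I expect this to follow from the order estimates already established in the proof of Theorem \ref{Ktheorem1smooth}, namely that $\Lambda_0(h_{\tau_k})$ grows linearly in $k$, which forces the pairings $\tau_k(h_{\tau_j})$ to be controlled and the tail products $\prod_{k>j}a(\zeta_k)^{-\tau_k(h_{\tau_j})}$ to converge once $\sum|\zeta_k|^2$ converges. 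The secondary difficulty is purely bookkeeping: organizing the Baker--Campbell--Hausdorff expansion in $\mathfrak n^-\cap w_{n-1}^{-1}\mathfrak n^+ w_{n-1}$ so that the linear-in-$\zeta_j$ term is cleanly separated from the remainder. Because this nilpotent algebra is finite dimensional at each finite stage and the structure is triangular, the BCH series terminates at each stage, and I would present the extraction of $x_j^*$ as a finite computation at level $N$ followed by the passage $N\to\infty$ justified by the continuity of $\zeta\mapsto\tilde k_2$ already used in Theorem \ref{Ktheorem1smooth}.
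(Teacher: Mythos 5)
Your overall strategy --- refine the induction behind (\ref{induction}), take the $n=2$ computation (\ref{2ndcase}) as exhibiting the diagonal rescaling mechanism, and argue that the leading term of $x_j^*$ is $\zeta_j$ times a product of rescalings while all commutator corrections fall into $R_j$ --- is the right starting point, and it is essentially how the paper proceeds (by upward induction on the number of factors rather than your downward induction on $j$, which is cosmetic). The gap is in the step you defer as ``purely bookkeeping.'' The claim that $R_j$ depends only on $\zeta_{j+1},\zeta_{j+2},\dots$ is not forced by ``the triangular structure of (\ref{induction})''; it is the entire content of the proposition, and it can fail for two concrete reasons your argument does not address. First, when $exp(-\bar{\zeta}_ne_{\gamma_n})$ is commuted past the accumulated lower-triangular factor, the $ad$-expansion produces terms in the root spaces $p\gamma_n-w_{n-1}\tau_j$, $p>0$; if such a root ever coincided with $-w_{n-1}\tau_k$ for some $k>j$, a term involving $\zeta_j$ would be injected into the coefficient of $f_{\tau_k}$, contaminating $R_k$ with a variable of index less than $k$. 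Second, to re-triangularize one must split $exp(\sum X_j^{*}e_{-w_{n-1}\tau_j}+Y)$ relative to $N^+=\left(N^+\cap w_{n-1}N^-w_{n-1}^{-1}\right)\left(N^+\cap w_{n-1}N^+w_{n-1}^{-1}\right)$, and the second factor is \emph{not normal}, so the coefficients change under this splitting ($\tilde x^*\ne X^*$ in general) and could again acquire forbidden dependence. The paper handles both points by root combinatorics: two lemmas, each proved from part (b) of Lemma \ref{rootlemma}, show that $p\gamma_n-w_{n-1}\tau_j\ne -w_{n-1}\tau_k$ and $p\gamma_n-w_{n-1}\tau_L-w_{n-1}\tau_j\ne-w_{n-1}\tau_k$ for $j<k<n$; the second of these makes $\sum_{j\le k}\mathbb Ce_{-w_{n-1}\tau_j}+\mathfrak n_2$ a subalgebra, which is what actually controls the BCH corrections. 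Your proof needs these (or equivalent) root-theoretic inputs; without them the assertion that the corrections are absorbed harmlessly into $R_j$ is unsupported.

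Two smaller points. The rescaling produced by conjugating $f_{\tau_j}$ past $a(\zeta_k)^{h_{\tau_k}}$ is $a(\zeta_k)^{-\tau_j(h_{\tau_k})}$, as in (\ref{2ndcase}) and in the middle of your own argument, so take care with the placement of indices when matching this to the displayed product. And the convergence issues to which you devote your final paragraph are not where the difficulty lies: once the finite-stage formula is established, the passage to the limit is covered by the continuity arguments already present in the proof of Theorem \ref{Ktheorem1smooth}, and the periodicity statement does follow from $\tau_{nl+j}=w_l^{-n}\cdot\tau_j$ exactly as you say.
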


\begin{proof}We need to refine
(\ref{induction}) and the subsequent argument. In place of
(\ref{induction}) we assume that $k^{(n-1)}$ has a triangular
factorization with
\begin{equation}\label{newinduction}l(k^{(n-1)})=exp(\sum_{j=1}^{n-1}x_j^{*(n-1)}
f_{\tau_{j}}) \in N^-\cap w_{n-1}^{-1}N^+w_{n-1},\end{equation}
where $x_{n-1}^{*(n-1)}=\zeta_{n-1}$ and
\begin{equation}\label{xform}
x_j^{*(n-1)}=\zeta_j\prod_{k=j+1}^{n-1}a(\zeta_k)^{-\tau_k(h_{\tau_j})}
+R_j^{(n-1)}(\zeta_{j+1},\zeta_{j+2},..)\end{equation} We have
established this for $n-1=2$; see (\ref{2ndcase}). Mimicking the
calculations following (\ref{induction}), we find

$$k^{(n)}=exp(\zeta_n f_{\tau_n})a(\zeta_n)^{h_{\tau_n}}\mathbf
w_{n-1}^{-1}
exp(-\bar{\zeta}_ne_{\gamma_n})exp(\sum_{j=1}^{n-1}x_j^{*(n-1)}
e_{-w_{n-1}\tau_{j}}) \mathbf w_{n-1}a(k^{(n-1)})u(k^{(n-1)})$$
\begin{equation}\label{k3} =exp(\zeta_n
f_{\tau_n})a(\zeta_n)^{h_{\tau_n}}\mathbf w_{n-1}^{-1}
exp(\sum_{j=1}^{n-1}x_j^{*(n-1)}
exp(-ad(\bar{\zeta}_ne_{\gamma_n}))(e_{-w_{n-1}\tau_{j}}))
exp(-\bar{\zeta}_ne_{\gamma_n}) \mathbf
w_{n-1}a(k^{(n-1)})u(k^{(n-1)})\end{equation} (Note that
$-w_{n-1}\tau_j$ is a positive root, $1\le j<n$)

\begin{lemma}For $p>0$ and $j<k<n$,
$$p\gamma_n-w_{n-1}\cdot \tau_j \ne -w_{n-1}\cdot \tau_k.$$
\end{lemma}

\begin{proof} Suppose otherwise, and apply $w_{n-1}^{-1}$ to both
sides. Then $p\tau_n-\tau_j=-\tau_k$ or $\tau_j=\tau_k+p\tau_n$.
This implies
$$\gamma_j=r_j..r_{k-1}\gamma_k+pr_j..,r_{n-1}\gamma_n.$$ This is
a contradiction, because the term on the left is simple, and the
terms on the right hand side are positive by (b) of Proposition
\ref{rootlemma}.
\end{proof}

We apply this to (\ref{k3}) to obtain
\begin{equation}\label{k4}k^{(n)}=exp(\zeta_n
f_{\tau_n})a(\zeta_n)^{h_{\tau_n}}\mathbf w_{n-1}^{-1}
exp(\sum_{j=1}^{n}X_j^{*(n)} e_{-w_{n-1}\tau_{j}}+Y)
exp(-\bar{\zeta}_ne_{\gamma_n}) \mathbf
w_{n-1}a(k^{(n-1)})u(k^{(n-1)}),\end{equation} where $X_j^{*(n)}$
has the form
\begin{equation}\label{Xform}
X_j^{*(n-1)}=\zeta_j\prod_{k=j+1}^{n-1}a(\zeta_k)^{-\tau_k(h_{\tau_j})}
+R_j^{(n-1)}(\zeta_{j+1},\zeta_{j+2},..)\end{equation} and $Y\in
\mathfrak n^+\cap w_{n-1}\mathfrak n^+ w_{n-1}^{-1}$ Now we need
to write
\begin{equation}\label{decomp1}exp(\sum_{j=1}^{n}X_j^{*(n)}
e_{-w_{n-1}\tau_{j}}+Y)=exp(\sum_{j=1}^{n}\tilde x_j^{*(n)}
e_{-w_{n-1}\tau_{j}})exp(y),\end{equation} relative to the
decomposition
$$N^+=\left(N^+\cap w_{n-1}N^-w_{n-1}^{-1}\right)\left(N^+\cap
w_{n-1}N^+w_{n-1}^{-1}\right).$$ Because in general the second
group is not normal, it is probably not true in general that
$\tilde x^*= X^*$. So it is not presently clear that $\tilde
x_j^{*(n)}$ has the form (\ref{Xform}), and we need to look more
carefully at (\ref{decomp1}).

\begin{lemma}For $p>0$, $j<k<n$ and $0<L<n$, if the
$p\gamma_n-w_{n-1}\tau_L$ root space belongs to $\mathfrak n^+\cap
w_{n-1}\mathfrak n^+ w_{n-1}^{-1}$, then
\begin{equation}\label{wrong}p\gamma_n-w_{n-1}\tau_L-w_{n-1}\tau_j\ne -w_{n-1}\tau_k.\end{equation}
\end{lemma}

\begin{proof}The assumption concerning $p\gamma_n-w_{n-1}\tau_L$ is equivalent to
$p\tau_n>\tau_L$. Suppose (\ref{wrong}) is false. Applying
$w_{n-1}^{-1}$ to both sides, we obtain
$p\tau_n-\tau_L-\tau_j=-\tau_k$, or
$w_{j-1}(p\tau_n-\tau_L)+w_{j-1}\tau_k=w_{j-1}\tau_j=\gamma_j$.
The right hand side is simple. The first term on the left hand
side must be positive. Otherwise $p\tau_n-\tau_L=\tau_r$, for some
$r=1,..,j-1$. But then $p\tau_n=\tau_L+\tau_r$, which is
impossible. The second term is positive by (b) of Lemma
\ref{rootlemma}). We thus obtain a contradiction.
\end{proof}

Let $\mathfrak n_2$ denote the subalgebra of $\mathfrak n^+\cap
w_{n-1}\mathfrak n^+ w_{n-1}^{-1}$ generated by the roots
$p\gamma_n-w_{n-1}\tau_L$ as in the proceeding Lemma. The Lemma
implies that \begin{equation}\label{filtration}\mathfrak n^+\cap
w_{k}\mathfrak n^-w_k^{-1}+\mathfrak n_2 =\sum_{j\le k}\mathbb C
e_{-w_{n-1}\tau_j} +\mathfrak n_2\end{equation} is a subalgebra.

We rewrite (\ref{decomp1}) as
$$exp(\sum_{j=1}^{n}\tilde x_j^{*(n)} e_{-w_{n-1}\tau_{j}})=
exp(\sum_{j=1}^{n}X_j^{*(n)} e_{-w_{n-1}\tau_{j}}+Y)exp(-y)$$
where $Y,y\in \mathfrak n_2$. (\ref{filtration}) implies $\tilde
x_k^{*(n)}$ is a combination of $X_j^{*(n)}$, $j\le k$, $Y$, and
$y$. Since the $X_j^{*(n)}$ have the form (\ref{Xform}), the same
is true of $\tilde x_k^{*(n)}$.

Inserting this into (\ref{k3}) leads directly to
(\ref{newinduction}) with $n$ in place of $n-1$. This completes
the proof of the first statement in the Proposition. The second
statement follows from affine periodicity (see (\ref{affineeqn})).
\end{proof}

\end{document}